\newtheorem{theorem}{Theorem}[section]
\newtheorem{lemma}[theorem]{Lemma}
\newtheorem{proposition}[theorem]{Proposition}
\theoremstyle{definition}
\newtheorem{definition}[theorem]{Definition}
\theoremstyle{remark}
\newtheorem{remark}[theorem]{Remark}
\numberwithin{equation}{section}
\newcommand{\real}{{\mathbb R}}
\newcommand{\A}{{\mathcal A}}
\newcommand{\E}{{\mathcal E}}
\newcommand{\M}{{\mathcal M}}
\newcommand{\N}{{\mathcal N}}
\newcommand{\R}{{\mathcal R}}
\newcommand{\tr}{\mbox{\rm tr}}
\newcommand{\8}{\infty}
\newcommand{\el}{\ell}
\newcommand{\be}{\begin{eqnarray*}}
\newcommand{\ee}{\end{eqnarray*}}
\newcommand{\beq}{\begin{equation}}
\newcommand{\eeq}{\end{equation}}
\newcommand{\beqn}{\begin{equation*}}
\newcommand{\eeqn}{\end{equation*}}
\newcommand{\bsp}{\begin{split}}
\newcommand{\esp}{\end{split}}
\begin{document}

\setcounter{page}{1}

\title[Interpolation of Haagerup noncommutative Hardy spaces]{Interpolation of Haagerup noncommutative Hardy spaces}

\author[Turdebek N. Bekjan \MakeLowercase{and}  Madi Raikhan]{Turdebek N. Bekjan$^1$$^{*}$ \MakeLowercase{and} Madi Raikhan$^2$}

\address{$^{1}$College of Mathematics and Systems Science, Xinjiang
University, Urumqi 830046, China.}
\email{\textcolor[rgb]{0.00,0.00,0.84}{ bekjant@yahoo.com;
bek@xju.edu.cn}}

\address{$^{2}$ Faculty of Mechanics and Mathematics, L.N. Gumilyov Eurasian National University,  Astana 010008, Kazakhstan.}
\email{\textcolor[rgb]{0.00,0.00,0.84}{mady1029@yandex.kz}}


\let\thefootnote\relax\footnote{Copyright 2016 by the Tusi Mathematical Research Group.}

\subjclass[2010]{Primary 46L52; Secondary 47L05.}

\keywords{subdiagonal  algebras, Haagerup noncommutative $H^{p}$-space, interpolation.}

\date{Received: xxxxxx; Revised: yyyyyy; Accepted: zzzzzz.
\newline \indent $^{*}$Corresponding author}

\begin{abstract}
Let $\mathcal{M}$ be a $\sigma$-finite von Neumann algebra, equipped with a
 normal  faithful state $\varphi$,  and  let $\mathcal{A}$ be maximal subdiagonal algebra of $\mathcal{M}$.
 We prove  Stein-Weiss type
interpolation theorem of   Haagerup
noncommutative $H^{p}$-spaces associated with $\A$.
\end{abstract} \maketitle

\section{Introduction}
 In \cite{A}, for von Neumann algebra $\mathcal{M}$  with a faithful, normal
finite trace, Arveson introduced the notion of finite, maximal, subdiagonal
algebras  $\mathcal{A}$ of $\mathcal{M},$ as  noncommutative analogues of
weak*-Dirichlet algebras. In \cite{MW1}, Marsalli and West defined the noncommutative $H^p$-space
by the closure of $\A$ in the noncommutative $L^p$-space $L^p(\M)$ and extended  some results of the classical Hardy spaces on the torus
to this noncommutative setting. Using some of Arveson's ideas, Labuschagne \cite{L1}
showed that in the context of finite von Neumann algebras, these maximal subdiagonal
algebras satisfy a Szeg$\ddot{o}$ formula. Pursuant to this breakthrough, the theory of the (noncommutative) $H^{p}$-spaces associated with such algebras   has been rapidly developed. Many classical results of $H^p$-spaces associated with weak* Dirichlet algebras have been transferred to the noncommutative setting (\cite{BL1}).

In \cite{J1, J2}, Ji studied  Haagerup noncommutative
$H^{p}$-spaces based on Haagerup's noncommutative
$L^p$-space $L^p(\M)$ associated with a $\sigma$-finite von Neumann algebra $\M$ and a maximal subdiagonal
algebra $\A$ of $\M$.  Ji extended some results in
\cite{MW1} to the Haagerup noncommutative
$H^{p}$-space case (see also \cite{JS}). Labuschagne \cite{L2} showed that a Beurling type theory of invariant subspaces of
Haagerup noncommutative $H^2$ spaces holds. The first-named author \cite{B1} proved  a Szeg\"o type factorization theorem for   Haagerup
noncommutative $H^{p}$-spaces.

 Kosaki \cite{Kos} proved  a Haagerup noncommutative
$L^{p}$-spaces analogue of the classical Stein-Weiss interpolation theorem. In general, the real interpolation theorem of classical $L^p$-spaces is no longer valid for the Haagerup noncommutative
$L^{p}$-spaces (see Example 3.3 in\cite{PX}).

 In \cite{PX}, Pisier and Xu obtained
noncommutative version of P. Jones' theorem  for noncommutative Hardy spaces associated with a finite
subdiagonal algebra. It is stated in \cite{PX}
without proof (see the remark following Lemma 8.5 there). In \cite{B}, the first-named author extends the results of the real interpolation
method to the semifinite case. Also, the results for the complex interpolation
method are extended to the semifinite case (see \cite{BO}). The main objective of this paper is to prove Stein-Weiss type
interpolation theorem of   Haagerup
noncommutative $H^{p}$-spaces.

The organization of the paper is as follows. In Section 2, we
give some definitions and  related results of  Haagerup
noncommutative $H^{p}$-spaces.   The Stein-Weiss type
interpolation theorems of  Haagerup
noncommutative $H^{p}$-space are presented in Section 3.

\section{Preliminaries}

We use standard notation in operator algebras. We refer to \cite{PT,T3}
 for modular theory, to \cite{H1} for the Haagerup noncommutative
$L^p$-spaces and to \cite{J1,J2} for the Haagerup noncommutative
$H^p$-spaces. Let us recall some basic facts about these spaces and fix the relevant
notation used throughout this paper.
 Let $\mathcal{M}$ be a $\sigma$-finite von Neumann algebra on a complex
Hilbert space $\mathcal{H}$, equipped with a distinguished
 normal  faithful state $\varphi$. Recall briefly the definition of Haagerup  noncommutative $L^p$ spaces  associated with $\M$.
Let $\{\sigma_{t}^\varphi\}_{t\in\mathbb{R}}$
 be the one parameter modular automorphism
group of $\mathcal{M}$ associated with $\varphi$,  and let $\mathcal{N}$ denote
the crossed product$
\mathcal{M}\rtimes_{\sigma^\varphi}\mathbb{R}$
of $\mathcal{M}$ by
$\{\sigma_{t}^\varphi\}_{t\in\mathbb{R}}.$ Then
$\mathcal{N}$ is the von Newmann algebra acting on the Hilbert space
$L^{2}(\mathbb{R},\mathcal{H}),$ generated by
$$
\left\{ \pi(x):\;x\in\mathcal{M}\right\}\cup\left\{\lambda(s):\;s\in \mathbb{R}\right\},
$$
where  the operator $\pi(x)$ is defined by
$$
(\pi(x)\xi)(t)=\sigma_{-t}^\varphi(x)\xi(t),\quad \forall\xi\in
L^{2}(\mathbb{R},\mathcal{H}),\;\forall t\in\mathbb{R},
$$
and the operator $\lambda(s)$ is defined by

$$
(\lambda(s)\xi)(t)=\xi(t-s),\quad \forall\xi\in
L^{2}(\mathbb{R},\mathcal{H}),\;\forall t\in\mathbb{R}.
$$
We identify $\mathcal{M}$ with its image $\pi(\mathcal{M})$ in $\mathcal{N}$. The operators $\pi(x)$ and $\lambda(t)$ satisfy
the following commutation relation:
\be
\lambda(t)\pi(x)\lambda(t)^*=\pi(\sigma_{t}^\varphi(x)), \quad\forall t\in \real, \;\forall x\in\M.
\ee
Then the modular automorphism group $\{\sigma_{t}^\varphi\}_{t\in\mathbb{R}}$ is given
by
$$
\sigma_{t}^\varphi(x)=\lambda(t)x\lambda^{\ast}(t), \quad x\in\mathcal{M},\; t\in\mathbb{R}.
$$

We denote by $\{\hat{\sigma}_{t}\}_{t\in\mathbb{R}}$ the dual action of $\mathbb{R}$
on $\mathcal{N}$.  Then the dual action $\hat{\sigma}_{t}$ is uniquely determined by the
following conditions: for any $x\in\M$ and $s\in\mathbb{R}$,
$$
\hat{\sigma}_{t}(x)=x \quad\mbox{and}\quad
\hat{\sigma}_{t}(\lambda(s))=e^{-ist}\lambda(s),\quad \forall t\in\mathbb{R}.
$$
Hence
$$
\mathcal{M}=\{x\in\mathcal{N}: \; \hat{\sigma}_{t}(x)=x,\;\forall
t\in\mathbb{R}\}.
$$
Since $\mathcal{N}$ is semi-finite (cf. \cite{PT}), there exists a
 normal semi-finite faithful trace $\tau$ on $\mathcal{N}$ satisfying the
equation
$$
\tau\circ\hat{\sigma}_{t}=e^{-t}\tau,\quad \forall t\in\mathbb{R}.
$$

Also recall that any  normal semi-finite faithful weight $\psi$ on
$\mathcal{M}$ induces a dual normal semi-finite weight $\hat{\psi}$ on
$\mathcal{N}.$ Then $\hat{\psi}$ admits a Radon-Nikodym derivative with
respect to $\tau$ (cf. \cite{PT}). In particular the dual weight
$\hat{\varphi}$ of our distinguished state $\varphi$ has the
Radon-Nikodym derivative $D$ with respect to $\tau.$ Then
$$
\hat{\varphi}(x)=\tau(Dx),\quad x\in \mathcal{N}_{+}.
$$
Recall that $D$ is an invertible positive selfadjoint operator on
$L^{2}(\mathbb{R},\mathcal{H}),$ affiliated with $\mathcal{N},$ and that the
regular representation $\lambda(t)$ above is given by
$$
\lambda(t)=D^{it},\quad \forall t\in \mathbb{R}.
$$
 Now we define
Haagerup noncommutative $L^{p}$-spaces. Recall that $L^{0}(\mathcal{N},\tau)$
denotes the topological $\ast$-algebra of all operators on
$L^{2}(\mathbb{R},\mathcal{H})$ measurable with respect to
$(\mathcal{N},\tau).$ Then the Haagerup noncommutative $L^{p}$-spaces,
$0<p\leq\infty$, are defined by
$$
L^{p}(\mathcal{M},\varphi)=\{x\in L^{0}(\mathcal{N},\tau):
\;\hat{\sigma}_{t}(x)=e^{-\frac{t}{p}}x,\;\forall t\in\mathbb{R}\}.
$$
It is clear that  $L^{p}(\mathcal{M},\varphi)$ is a vector subspace of
$L^{0}(\mathcal{N},\tau),$ invariant under the $\ast$-operation. The algebraic
structure of   $L^{p}(\mathcal{M},\varphi)$ is inherited from that of
$L^{0}(\mathcal{N},\tau).$  Let $x\in L^{p}(\mathcal{M},\varphi)$ and $x=u|x|$
be its polar decomposition, where
$$
|x|=(x^{\ast}x)^{\frac{1}{2}}
$$
is the
modulus of $x$. Then
 $$
 u\in\mathcal{M}\quad\mbox{and}\quad|x|\in L^{p}(\mathcal{M},\varphi).
 $$
Recall that
$$
L^{\infty}(\mathcal{M},\varphi)=\mathcal{M}.
$$
As mentioned previously, for any $\psi\in\mathcal{M}_{\ast}^{+},$ the dual
weight $\hat{\psi}$ has a Radon-Nikodym derivative with respect to
$\tau,$ denoted by $h_{\psi}:$
$$
\hat{\psi}(x)=\tau(h_{\psi}x),\quad x\in\mathcal{N}_{+}.
$$
Then
$$
h_{\psi}\in L^{0}(\mathcal{N},\tau)
$$
and
$$
\hat{\sigma}_{t}(h_{\psi})=e^{-t}h_{\psi},\quad\forall
t\in\mathbb{R}.
$$
So
$$
h_{\psi}\in L^{1}(\mathcal{M},\varphi)_{+}.
$$
Hence, this correspondence
between $\mathcal{M}_{\ast}^{+}$ and $ L^{1}(\mathcal{M},\varphi)_{+}$ extends
to a bijection between $\mathcal{M}_{\ast}$ and $ L^{1}(\mathcal{M},\varphi).$
So that for $\psi\in\mathcal{M}_{\ast}, $ if  $\psi=u|\psi|$ is its polar
decomposition, the corresponding  $h_{\psi} \in L^{1}(\mathcal{N},\tau)$ admits
the polar decomposition
$$
h_{\psi}=u|h_{\psi}|=uh_{|\psi|}.
$$
Then the norm on
$L^{1}(\mathcal{M},\varphi),$ is defined as
$$
\parallel h_{\psi}\parallel_{1}=|\psi|(1)=\parallel
\psi\parallel_{\ast},\quad \forall\psi\in\mathcal{M}_{\ast}.
$$
In this way,
$$
L^{1}(\mathcal{M},\varphi)=\mathcal{M}_{\ast}\;\mbox{isometrically}.
$$
For
$0<p<\infty,$ we define
$$
\parallel x\parallel_{p}=\parallel |x|^{p}\parallel_{1}^{\frac{1}{p}},\quad \forall x\in
L^{p}(\mathcal{M},\varphi),
$$
since $x\in L^{p}(\mathcal{M},\varphi)$ if and only if $|x|\in
L^{p}(\mathcal{M},\varphi)$.
Then for $1\leq p<\infty$ (resp. $0<p<1$).
$$
(L^{p}(\mathcal{M},\varphi),\;\|\cdot\|_p)
$$
is a Banach space (resp. a quasi-Banach space), and
the norm satisfies the following equations:
$$
\| x\|_{p}=\| x^{\ast}\|_{p}=\| |x|\|_{p},\quad \forall x\in
L^{p}(\mathcal{M},\varphi).
$$
 The usual Holder inequality also holds for the $L^{p}(\mathcal{M},\varphi)$ spaces. Let
 $$
 0<p,q,r\leq\infty,\;\frac{1}{r}=\frac{1}{p}+\frac{1}{q}.
 $$
If $x\in L^{p}(\mathcal{M},\varphi),\; y\in L^{q}(\mathcal{M},\varphi)$, then
$$
xy\in L^{r}(\mathcal{M},\varphi),\quad\mbox{and}\quad\|
xy\|_{r}\leq\| x\|_{p}\| x\|_{q}.
$$

 It is well known that  $L^{p}(\mathcal{M},\varphi)$ is independent of
$\varphi$ up to isometry. So, following Haagerup, we will use the notation
$L^{p}(\mathcal{M})$ for the abstract Haagerup noncommutative $L^{p}$-space
$L^{p}(\mathcal{M},\varphi)$. To describe duality of Haagerup noncommutative
$L^{p}$-space, we use the distinguished linear function on $L^{1}(\mathcal{M})$
which is defined by
$$
tr(x)=\psi_{x}(1),\quad \forall x\in L^{1}(\mathcal{M}),
$$
where $\psi_{x}\in\mathcal{M}_{\ast}$ is the unique normal functional
associated with $x$ by the above identification between $\mathcal{M}_{\ast}$
and $L^{1}(\mathcal{M}).$ Then $tr$ is a continuous functional on
$L^{1}(\mathcal{M})$ satisfying
$$
|tr(x)|\leq tr(|x|)=\parallel x\parallel_{1},\quad \forall x\in L^{1}(\mathcal{M}).
$$
Let
$$
1\leq p<\infty,\;\frac{1}{p}+\frac{1}{q}=1.
$$
Then $tr$ have the following  property:
$$
tr(xy)=tr(yx),\quad \forall x\in L^{p}(\mathcal{M}),\;\forall y\in
L^{q}(\mathcal{M}).
$$
The bilinear form $(x,y)\mapsto
tr(xy)$ defines a duality bracket between $L^{p}(\mathcal{M})$ and
$L^{q}(\mathcal{M})$, for which
$$
(L^{p}(\mathcal{M}))'=L^{q}(\mathcal{M})\quad  \mbox{isometrically for all}\;1\leq
p<\infty.
$$

Moreover, our distinguished state $\varphi$ can be recovered from $tr$
(recalling that $D$ is the Radon-Nikodym derivative of $\hat{\varphi}$ with
respect to $\tau$),  that is,
$$
\varphi(x)=tr(Dx),\quad \forall x\in \mathcal{M}.
$$

Let $0<p\le\infty$,\; $K \subset L^{p}(\mathcal{M})$.  We denote by $[K]_{p}$  the closed linear span of  $K$ in $L^{p}(\mathcal{M})$ (relative to the w*-topology for $p =\infty$) and set
$$
J(K)=\{x^{\ast}:\;x\in K\}.
$$

For $0<p<\infty,\; 0\le \eta\le1$, we have that
$$
L^{p}(\mathcal{M})=[D^{\frac{1-\eta}{p}}\M D^{\frac{\eta}{p}}]_p.
$$

 Let $\mathcal{D}$ be a von Neumann
 subalgebra of $\mathcal{M}.$ Let $\mathcal{E}$ be the (unique) normal faithful
 conditional expectation of $\mathcal{M}$ with respect to $\mathcal{D}$ which
 leaves $\varphi$ invariant.

\begin{definition} A w*-closed subalgebra $\mathcal{A}$ of $\mathcal{M}$
is called a subdiagonal algebra of $\mathcal{M}$ with respect to
$\mathcal{E}$(or to $\mathcal{D}$)  if \begin{enumerate}

\item $\mathcal{A}+ J(\mathcal{A})$ is w*-dense in  $\mathcal{M}$,

\item $\mathcal{E}(xy)=\mathcal{E}(x)\mathcal{E}(y),\; \forall\;x,y\in
\mathcal{A},$

\item $\mathcal{A}\cap J(\mathcal{A})=\mathcal{D},$
\end{enumerate}
The algebra $\mathcal{D}$ is  called the diagonal of $\mathcal{A}.$
\end{definition}

We say that $\mathcal{A}$ is a maximal subdiagonal algebra in $\mathcal{M}$ with
respect to $\mathcal{E}$ in the case that $\mathcal{A}$ is not properly contained
in any other subalgebra of $\mathcal{M}$ which is a subdiagonal with respect to
$\mathcal{E}.$  Let
$$\mathcal{A}_{0}=\{x\in \mathcal{A}:\;\mathcal{E}(x)=0\}
$$
and
$$
 \mathcal{A}_{m}=\{x\in
\mathcal{M}:\;\mathcal{E}(yxz)=\mathcal{E}(yxz)=0,\;\forall y\in
\mathcal{A},\;\forall z\in \mathcal{A}_{0}\}.
$$
By Theorem  2.2.1 in \cite{A},  $\mathcal{A}_{m}$ is a maximal
subdiagonal algebra of $\mathcal{M}$ with respect to $\mathcal{E}$ containing
$\mathcal{A}.$

 It follows from Theorem 2.4 in \cite{JOS} and Theorem 1.1 in \cite{X} that a subdiagonal algebra $\A$ of $\mathcal{M}$ with respect to
 $\mathcal{D}$ is maximal if and only if
\beq\label{maximal}
\sigma_{t}^\varphi(\A)=\A,\quad \forall t\in\mathbb{R}.
\eeq

In this paper $\mathcal{A}$ always denotes a maximal subdiagonal algebra in $\mathcal{M}$ with
respect to $\mathcal{E}$.

\begin{definition}\label{def:hp}   For $0<p<\infty$,  we define the Haagerup
noncommutative $H^{p}$-space that

$$
H^{p}(\mathcal{A})=[\mathcal{A}D^{\frac{1}{p}}]_{p},\quad H^{p}_{0}(\mathcal{A})=
 [\mathcal{A}_{0}D^{\frac{1}{p}}]_{p}.
$$
\end{definition}

From Proposition 2.1 in \cite{J2}, we know that for $1\le p<\infty,\; 0\le \eta\le1$,
\beq\label{equalityHp}
H^{p}(\mathcal{A})=[D^{\frac{1-\eta}{p}}\A D^{\frac{\eta}{p}}]_p,\quad H^{p}_0(\mathcal{A})=[D^{\frac{1-\eta}{p}}\A_0 D^{\frac{\eta}{p}}]_p.
\eeq

It is known that
$$
L^{p}(\mathcal{D})=[D^{\frac{1-\eta}{p}}\mathcal{D} D^{\frac{\eta}{p}}]_p,\quad  \forall p\in [1,\infty),\; \forall \eta\in [1,0],
$$
 and the conditional expectation $\mathcal{E}$ extends to a contractive
projection  from $L^p(\mathcal{M})$ onto $L^p(\mathcal{D})$. The extension
will be denoted still by $\mathcal{E}$.
Let
$$
1\le r,\;p,\;q\le\8,\;\frac{1}{r}=\frac{1}{p}+\frac{1}{q}.
$$
Then
\be
\E(xy)=\E(x)\E(y),\quad\forall x\in H^p(\A),\;\forall y\in H^q(\A).
\ee
Indeed, let  $a,b\in\A$. By Proposition 2.3 in \cite{JX}, we have that
$$\begin{array}{rl}
    \E(D^\frac{1}{p}abD^\frac{1}{q}) & =D^\frac{1}{p}\E(ab)D^\frac{1}{q} \\
     & =D^\frac{1}{p}\E(a)\E(b)D^\frac{1}{q}\\
     &=\E(D^\frac{1}{p}a)\E(b D^\frac{1}{q}).
  \end{array}
$$
Hence, by \eqref{equalityHp}, we obtain the desired result.


 We will  repeatedly use the following fact:
\begin{equation}\label{eq:conditional expectation}
  tr(\E(x))=tr(x),\qquad x\in L^1(\M).
\end{equation}

 Let  $\M_1$ be a von Neumann
 subalgebra of $\M$ such that $\M_1$  is invariant under $\sigma^\varphi$ and $\E$, i.e.,
 $$
 \sigma_{t}^\varphi(\M_1)\subset\M_1,\; \forall t\in\real\quad \mbox{and}\quad \E(\M_1)\subset\M_1.
 $$
Since $\M_1$ is $\sigma^\varphi$-invariant, it is well known (cf. \cite{T3})) that there is a (unique)
normal conditional expectation $\Psi$ such that
$$
\varphi\circ\Psi=\varphi.
$$
This conditional expectation  $\Psi:\M\rightarrow\M_1$ commutes with $\{\sigma_{t}^\varphi\}_{t\in\real}$ (cf. \cite{C}), i.e.,
\beq\label{commutes-conditional}
\Psi\circ\sigma_{t}^\varphi=\sigma_{t}^\varphi\circ\Psi, \quad \forall t\in\real.
\eeq
Now let $\phi=\varphi|_{\M_1}$ be the restriction of $\varphi$ to $\M_1$. Using the fact that $\M_1$ is $\sigma^\varphi$-invariant, we obtain that the modular
automorphism group associated with $\phi$ is  $\sigma^\varphi|_{\M_1}$, i.e.,
\beq\label{automorphism-equal}
\sigma^\phi_t=\sigma^\varphi_t|_{\M_1},\quad \forall t\in\real.
\eeq
Hence the crossed product
$$
\N_1=\M_1\rtimes_{\sigma^\phi}\mathbb{R}
$$
 is a von Neumann subalgebra
of
$$
\N=\mathcal{M}\rtimes_{\sigma^\varphi}\mathbb{R}.
$$
Let $\upsilon$ be the canonical  normal semi-finite faithful trace  on $\N_1$. Then $\upsilon$ is equal to the
restriction of $\tau$ to $\N_1$ (recalling that $\tau$  is the canonical trace on $\N$). Observe that the
conditional expectation $\Psi$ extends to a normal faithful conditional expectation $\widehat{\Psi}$
from $\N$ onto $\N_1$, satisfying $\tau\circ\widehat{\Psi}=\tau$, that is, $\upsilon\circ\widehat{\Psi}=\tau$. Let $\widehat{\varphi}$ and $\widehat{\phi}$ be the dual
weights of $\varphi$ and $\phi$ respectively. Then $\widehat{\phi}\circ\widehat{\Psi}=\widehat{\varphi}$, and by \cite{C},
 the Radon-Nikodym derivative of $\widehat{\phi}$ with respect to $\upsilon$ is equal to $D$,
the Radon-Nikodym derivative of $\widehat{\varphi}$ with respect to $\tau$.

Let $\M,\;\varphi,\;\sigma_t^{\varphi},\;\E,\;\mathcal{D},\;\A,\;\M_1,\;\Psi $ be fixed as in the above.

\begin{proposition}\label{pro:Hp-subspace}  Let  $\A$ be $\Psi$- invariant, i.e., $\Psi(\A)\subset\A$. Set $\A_1=\A\cap\M_1$.
 Then   $\mathcal{A}_1$
is  a maximal subdiagonal algebra of $\mathcal{M}_1$ with respect to
 $\E|_{\M_1}$ and
$H^p(\A_1)$ coincides isometrically with a subspace of $H^p(\A)$ for every $0 < p < \8$.
\end{proposition}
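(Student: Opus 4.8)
The plan is to establish the two assertions in turn: that $\A_1$ is a maximal subdiagonal algebra of $\M_1$ with diagonal $\D_1:=\D\cap\M_1$, and then the isometric embedding $H^p(\A_1)\hookrightarrow H^p(\A)$. First I would note that $\A_1=\A\cap\M_1$ is a w*-closed subalgebra of $\M_1$, and that $\E|_{\M_1}$ is a normal faithful conditional expectation of $\M_1$ onto $\D_1$: indeed $\E(\M_1)\subseteq\M_1$ by hypothesis while $\E(\M_1)\subseteq\D$ since $\D$ is the range of $\E$, so $\E$ carries $\M_1$ into $\D\cap\M_1=\D_1$ and fixes $\D_1$ pointwise. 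The multiplicative condition is inherited from $\A$: for $x,y\in\A_1\subseteq\A$ the product $xy$ lies in $\M_1$, so $\E|_{\M_1}(xy)=\E(xy)=\E(x)\E(y)$. The diagonal condition is a set-theoretic computation: since $\M_1$ is $\ast$-closed we have $J(\A_1)=J(\A)\cap\M_1$, whence $\A_1\cap J(\A_1)=(\A\cap J(\A))\cap\M_1=\D\cap\M_1=\D_1$.

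The one defining condition carrying genuine content is the w*-density of $\A_1+J(\A_1)$ in $\M_1$, and this is precisely where the hypothesis $\Psi(\A)\subseteq\A$ enters. Since $\Psi$ is $\ast$-preserving, $\Psi(J(\A))=J(\Psi(\A))\subseteq J(\A)$, and as $\Psi$ maps $\M$ onto $\M_1$ we obtain $\Psi(\A)\subseteq\A\cap\M_1=\A_1$ and $\Psi(J(\A))\subseteq J(\A)\cap\M_1=J(\A_1)$. Because $\Psi$ is normal it is w*-continuous, so applying it to the w*-dense inclusion $\A+J(\A)\subseteq\M$ yields
$$
\M_1=\Psi(\M)=\Psi\big(\overline{\A+J(\A)}^{\,w*}\big)\subseteq\overline{\Psi(\A)+\Psi(J(\A))}^{\,w*}\subseteq\overline{\A_1+J(\A_1)}^{\,w*},
$$
which gives the required density. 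For maximality I would invoke criterion \eqref{maximal} in $\M_1$: since $\M_1$ is $\sigma^\varphi$-invariant we have $\sigma_t^\varphi(\M_1)=\M_1$ for all $t$, and maximality of $\A$ gives $\sigma_t^\varphi(\A)=\A$; hence for $x\in\A_1$, $\sigma_t^\varphi(x)\in\sigma_t^\varphi(\A)\cap\sigma_t^\varphi(\M_1)=\A\cap\M_1=\A_1$. By \eqref{automorphism-equal} this reads $\sigma_t^\phi(\A_1)=\A_1$ for all $t$, so \eqref{maximal} applied to the subdiagonal algebra $\A_1$ of $\M_1$ shows $\A_1$ is maximal.

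For the embedding the decisive structural fact, already recorded in the preamble, is that the Radon--Nikodym derivative $D$ used to construct $L^p(\M_1)$ coincides with the one for $L^p(\M)$, while $\upsilon=\tau|_{\N_1}$. From these I would first deduce $L^p(\M_1)\subseteq L^p(\M)$ as sets of $\tau$-measurable operators, since the dual action on $\N_1$ is the restriction of that on $\N$ and $\upsilon$-measurability relative to $\N_1$ entails $\tau$-measurability relative to $\N$. This inclusion is isometric: for $x\in L^p(\M_1)$ the modulus $|x|$ is computed identically in $\N_1$ and $\N$ and $\|x\|_{L^p(\M_1)}^p=\||x|^p\|_{L^1(\M_1)}$, and the trace-preserving identity $\tau\circ\widehat\Psi=\tau$ together with the compatibility $\widehat\phi\circ\widehat\Psi=\widehat\varphi$ of dual weights forces the distinguished trace functionals of $L^1(\M_1)$ and $L^1(\M)$ to agree on $L^1(\M_1)_+$, so that $\||x|^p\|_{L^1(\M_1)}=\||x|^p\|_{L^1(\M)}=\|x\|_{L^p(\M)}^p$. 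Finally, Definition~\ref{def:hp} applied in $\M_1$ gives $H^p(\A_1)=[\A_1 D^{1/p}]_p$ with the closure taken in $L^p(\M_1)$; since $L^p(\M_1)$ is complete it sits as a closed subspace of $L^p(\M)$, so this closure is carried isometrically onto the $L^p(\M)$-closure of $\A_1 D^{1/p}$, and the inclusion $\A_1 D^{1/p}\subseteq\A D^{1/p}$ then places it inside $[\A D^{1/p}]_p=H^p(\A)$.

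I expect the main obstacle to be the density condition for $\A_1+J(\A_1)$: it is the only step where the hypothesis $\Psi(\A)\subseteq\A$ is indispensable, and a naive argument using only $\A_1=\A\cap\M_1$ would fail, since w*-density in $\M$ does not descend to $\M_1$ without a normal, range-preserving projection. Once one observes that $\Psi$ simultaneously preserves $\A$, preserves $J(\A)$, and is w*-continuous, the density follows cleanly; by contrast the isometric embedding is essentially forced by the shared density $D$ and the trace-preserving conditional expectation $\widehat\Psi$, and should be routine.
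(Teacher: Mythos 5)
Your proposal is correct and follows essentially the same route as the paper: the subdiagonal axioms for $\A_1$ are verified the same way, the w*-density of $\A_1+J(\A_1)$ is obtained by pushing the dense set $\A+J(\A)$ forward under the normal, $\A$-preserving map $\Psi$, maximality is reduced to the modular-invariance criterion \eqref{maximal}, and the isometric embedding rests on the shared density $D$. The only (harmless) variations are that you deduce $\sigma_t^{\phi}(\A_1)=\A_1$ directly from the $\sigma^{\varphi}$-invariance of $\A$ and $\M_1$, where the paper routes through $\A_1=\Psi(\A)$ and \eqref{commutes-conditional}, and that you spell out the isometric inclusion $L^p(\M_1)\subseteq L^p(\M)$ which the paper takes as known from its preamble.
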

\begin{proof}

 It is clear that  $\mathcal{A}_1$ is a w*-closed subalgebra of $\mathcal{M}$ and $\E|_{\M_1}:\M_1\rightarrow\mathcal{D}_1$, where $\mathcal{D}_1=\mathcal{D}\cap\M_1$. Since $\E$ is multiplicative on $\A$, $\E|_{\M_1}$ is multiplicative on $\A_1$. On the other hand,
$$
\A_1\cap J(\A_1)=\A\cap J(A)\cap\M_1=\mathcal{D}\cap\M_1=\mathcal{D}_1.
$$
Thus it remains to show the $\sigma$-weak density of $\A_1+ J(\A_1)$ in $\M_1$. Let $x \in\M_1$. Since
$\A+J(\A)$ is $\sigma$-weakly dense in $\M$, there are $a_i, b_i \in \A$ such that
$$
x = \lim_{i}(a_i + b_i^* )\quad\sigma\mbox{-weakly}.
$$
Then by the normality of $\Psi$, we have
$$
x = \Psi(x) = \lim_{i}(\Psi(a_i) + \Psi(b_i)^*)\quad \sigma\mbox{-weakly}.
$$
Since $\A$ is $\Psi$-invariant,  $\Psi(a_i),\;\Psi(b_i) \in\A \cap\M_1=\A_1$. It follows that $\A_1 + J(\A_1)$
 is $\sigma$-weakly dense in $\M_1$. Thus $A_1$ is a subdiagonal algebra with respect to $\E|_{\M_1}$.

From
$$
\A_1=\Psi(\A_1)\subset\Psi(\A)\subset \A \cap\M_1=\A_1
$$
 follows that $\A_1=\Psi(\A)$. Hence, by \eqref{maximal}, \eqref{commutes-conditional} and \eqref{automorphism-equal}, we have that
$$
\sigma^\phi_t(a)=\sigma^\varphi_t(a)=\sigma^\varphi_t(\Psi(a))=\Psi(\sigma^\varphi_t(a))\in\A_1,\quad \forall a\in\A_1,\;\forall t\in\real.
$$
Using Theorem 1.1 in \cite{X} we obtain that  $\mathcal{A}_1$
is  a maximal subdiagonal algebra of $\mathcal{M}_1$ with respect to
 $\E|_{\M_1}$.

Since $\A_1\subset\A$, we obtain that for every $0 < p < \8$,
$$
H^p(\A_1)=[\A_1D^{\frac{1}{p}}]_p
$$
 coincides isometrically with a subspace of
  $$
  H^p(\A)=[\A D^{\frac{1}{p}}]_p.
  $$

\end{proof}

\begin{remark}\label{rk:independent}
For $1\le p<\8$,  $H^p(\A)$ is independent of $\varphi$ (see Theorem 2.5 in \cite{J1}).
\end{remark}

We also need to give a brief description of Haagerup's reduction theorem (cf. \cite{H2, X}). Let
$$
G=\cup_{n\ge1}2^{-n}\mathbb{Z}.
$$
Then it is
a discrete subgroup  of $\mathbb{R}$. We consider the discrete crossed product
\beq\label{defi:R}
\mathcal{R} =\mathcal{M}\rtimes_{\sigma^\varphi} G
\eeq
 with respect
to $\{\sigma_{t}^\varphi\}_{t\in \real}$ as above by replacing $\mathbb{R}$ by $G$ and $L_2(\mathbb{R},H)$ by $\el_2(G,H)$. Recall that $\R$ is a von Neumann algebra on $\el_2(G,H)$
generated by the operators $\pi(x),\; x\in \M$ and $\lambda(t),\; s \in G$, which are defined by
$$
(\pi(x)\xi)(t)=\sigma_{-t}(x)\xi(t),\quad (\lambda(s)\xi)(t)=\xi(t-s),\quad \; \xi\in\el_2(G,H),\quad \forall t \in G.
$$
Then $\pi$ is a normal faithful representation of $\M$ on
$\el_2(G,H)$ and we identify $\pi(\M)$ with $\M$.  The operators $\pi(x)$ and $\lambda(t)$ satisfy
the following commutation relation:
\beq\label{representation1}
\lambda(t)\pi(x)\lambda(t)^*=\pi(\sigma_{t}^\varphi(x)),\quad\forall t\in G,\;\forall x\in \M.
\eeq
Let $\widehat{\varphi}$ be the dual weight of $\varphi$ on $\mathcal{R}$. Then $\widehat{\varphi}$ is again a faithful
normal state on $\mathcal{R}$ whose restriction on $\mathcal{M}$ is $\varphi$. The modular automorphism group of $\widehat{\varphi}$ is uniquely determined
by
\be
\sigma_{t}^{\widehat{\varphi}}(x)=\sigma_{t}^\varphi(x),\quad \sigma_{t}^{\widehat{\varphi}}(\lambda_s(x))=\lambda_s(x),\quad x\in\M, \;s,t\in G.
\ee
Consequently, $\sigma_{t}^{\widehat{\varphi}}|_{\M}=\sigma_{t}^{\varphi}$, and so $\sigma_{t}^{\widehat{\varphi}}(\M)=\M$ for all $t\in\real$.
 We recall that there is a unique normal faithful conditional expectation $\Phi$ from
$\R$ onto $\M$ determined by
\beq\label{conditional-expectation1}
\Phi(\lambda(t)x)=\left\{\begin{array}{rl}
                           x & \mbox{if}\; t=0 \\
                           0 & \mbox{otherwise}
                         \end{array}
\right.,\quad \forall x\in\M,\;\forall t\in G.
\eeq
It satisfies
$$
\widehat{\varphi}\circ\Phi = \widehat{\varphi},\quad\sigma_{t}^{\widehat{\varphi}}\circ\Phi=\Phi\circ\sigma_{t}^{\widehat{\varphi}}, \quad\forall t\in \real.
$$

We denote  by $\R_{\widehat{\varphi}}$ the centralizer of  $\widehat{\varphi}$  in $\R$. Then
$$
\R_{\widehat{\varphi}}=\{x\in\R:\quad \sigma_{t}^{\widehat{\varphi}}(x)=x, \quad \forall t\in \real\}.
$$
For each $n \in \mathbb{N}$ there exists a unique $b_n\in\mathcal{Z}(\R_{\widehat{\varphi}})$ such that
$$0\le b_n\le2\pi\quad \mbox{and}\quad e^{ib_n}=\lambda(2^{-n}),$$
 where $\mathcal{Z}(\R_{\widehat{\varphi}})$ the center of $\R_{\widehat{\varphi}}$. Let $a_n=2^nb_n$ and define  normal faithful positive functionals on $\R$ by
$$
\varphi_n(x)=\widehat{\varphi}(e^{-a_n}x),\quad \forall x\in \R,\quad \forall n\in\mathbb{N}.
$$
Then $\sigma_{t}^{\varphi_n}$ is $2^{-n}$-periodic and
\beq\label{eq:modular}
\sigma_{t}^{\varphi_n}(x)=e^{-ita_n}\sigma_{t}^{\widehat{\varphi}}(x)e^{ita_n},\quad \forall x\in\R,\;\forall t\in \real,\;\forall n\in\mathbb{N}.
\eeq
Let $\R_n=\R_{\varphi_n}$. Then $\R_n$ is a finite von Neumann algebra equipped with the normal faithful
tracial state $\tau_n=\varphi_n|_{\R_n}$.

Define
$$
\Phi_n(x)=2^n\int_0^{2^{-n}}\sigma_{t}^{\varphi_n}(x)dt,\quad \forall x\in\R.
$$
By the $2^{-n}$-periodicity of $\sigma_{t}^{\varphi_n}$, we have that
\beq\label{eq:conditionexpectation-n}
\Phi_n(x)=\int_0^1\sigma_{t}^{\varphi_n}(x)dt,\quad \forall x\in\R.
\eeq
 Haagerup's reduction theorem (Theorem 2.1 and Lemma 2.7 in \cite{H2}) asserts that $\{\mathcal{R}_n\}_{n\ge1}$ is an
increasing sequence of von Neumann subalgebras of $\mathcal{R}$ with the following properties:
\begin{enumerate}
\item each $\mathcal{R}_n$ is finite;
\item   $\Phi_n$ is a faithful normal conditional expectation from $\mathcal{R}$ onto $\mathcal{R}_n$ such that
$$
\hat{\varphi}\circ\Phi_n = \hat{\varphi},\quad  \sigma_{t}^{\widehat{\varphi}}\circ\Phi_n=\Phi_n\circ\sigma_{t}^{\widehat{\varphi}},\quad \Phi_n \circ\Phi_{n+1} = \Phi_n,\quad \forall t\in \real,\; \forall n\in\mathbb{N};
$$
\item For any $x\in\R$, $\Phi_n(x)$ converges to $x$ $\sigma$-strongly as $n\rightarrow\8$;
\item $\bigcup_{n\ge1} \mathcal{R}_n$ is $\sigma$-weakly dense in $\mathcal{R}$.
 \end{enumerate}

Since $\mathcal{D}$ is $\sigma^\varphi_t$-invariant and $\sigma^\varphi_t|_{\mathcal{D}}$
is exactly the modular automorphism
group of $\varphi|_{\mathcal{D}}$, we do not need to distinguish $\varphi$ and $\varphi|_{\mathcal{D}}$, $\sigma^\varphi_t$ and $\sigma^\varphi_t|_{\mathcal{D}}$, respectively. Now let
$$
\widehat{\mathcal{D}} = \mathcal{D} \rtimes_{\sigma^\varphi} G.
$$
Then $\widehat{\mathcal{D}}$ is naturally identified as
a von Neumann subalgebra of $\mathcal{R}$, generated by all operators $\pi(x),\; x \in \mathcal{D}$ and
$\lambda(t),\; t \in G$. We can extend $\E$ to a normal
faithful conditional expectation $\widehat{\mathcal{E}}$ from $\mathcal{R}$ onto $\widehat{\mathcal{D}}$, which is uniquely determined
by
\beq\label{conditional expectation}
\widehat{\mathcal{E}}(\lambda(t)x)=\lambda(t)\mathcal{E}(x),\quad\forall x\in\M,\;\forall t\in G.
\eeq
This conditional expectation satisfies
$$
\widehat{\mathcal{E}}\circ\Phi_n =\Phi_{n}\circ\widehat{\mathcal{E}},\quad \forall n\in\mathbb{N}.
$$
Now let  $n\in\mathbb{N}$ and $\mathcal{D}_n= \mathcal{R}_n \cap \widehat{\mathcal{D}}$. Note that $\Phi_n|_{\widehat{\mathcal{D}}}$
and $ \widehat{\mathcal{E}}|_{\mathcal{R}_n}$
are normal faithful conditional expectations from $\widehat{\mathcal{D}}$ onto
$\mathcal{D}_n$, respectively, from $\mathcal{R}_n$ onto $\mathcal{D}_n$.

Since $\A$ is $\sigma_{t}^{\varphi}$-invariant, by \eqref{representation1}, the family of all linear combinations on
$\lambda(t) \pi(x)$, $t \in G,\; x \in \A$, is a *-subalgebra of $\R$. Let $ \widehat{\mathcal{A}}$ be its $\sigma$-weakly closure  in
$\mathcal{R}$, i.e.,
\beq\label{defi:Ahat}
\widehat{\mathcal{A}}=\overline{span\{\lambda(t) \pi(x):\;t \in G,\; x \in \A\}}^{\sigma-weakly },
\eeq
 and let $\mathcal{A}_n =\widehat{\mathcal{A}}\cap \mathcal{R}_n$. By Lemma 3.1-3.3 in \cite{X}, $ \widehat{\mathcal{A}}$ (resp.  $\mathcal{A}_n$) is a maximal
subdiagonal algebra of $\mathcal{R}$ (resp. $\mathcal{R}_n$) with respect to $ \widehat{\mathcal{E}}$ (resp. $ \widehat{\mathcal{E}}|_{\mathcal{R}_n}$).

The next result is known. For easy reference, we give its proof ( see \cite{J1,X}).
 \begin{lemma}\label{sequence-sudgiagonal} Let $\M,\; \A,\;\E,\;\R,\;\widehat{\A},\;\widehat{\E},\;\Phi,\;\A_n,\;\Phi_n\; (\forall n\in\mathbb{N})$ be fixed as in the above. Then
\begin{enumerate}
\item $\Phi(\widehat{\A}\:)=\A,\quad\Phi_n(\widehat{\A}\:)=\A_n, \quad \forall n\in\mathbb{N}$;
\item $\bigcup_{n\ge1} \A_n$ is $\sigma$-weakly dense in $\widehat{\A}$;
\item $\widehat{\E}(\M)\subset\M$.

\end{enumerate}
 \end{lemma}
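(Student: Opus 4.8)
The plan is to verify each of the three claims in turn, exploiting the explicit generating description \eqref{defi:Ahat} of $\widehat{\A}$ together with the defining formulas for $\Phi$ and $\Phi_n$, and finally using the reduction-theorem properties (1)--(4) listed above.

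\medskip

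\textbf{Claim (1): $\Phi(\widehat{\A}\,)=\A$ and $\Phi_n(\widehat{\A}\,)=\A_n$.}
First I would compute $\Phi$ on the generators. By the defining formula \eqref{conditional-expectation1}, for $x\in\A$ and $t\in G$ we have $\Phi(\lambda(t)\pi(x))=\pi(x)$ if $t=0$ and $0$ otherwise; hence $\Phi$ maps every generator of $\widehat{\A}$ into $\A$. Since $\Phi$ is normal and $\widehat{\A}$ is the $\sigma$-weak closure of the span of these generators, normality gives $\Phi(\widehat{\A}\,)\subset\A$. The reverse inclusion is immediate because $\A=\pi(\A)\subset\widehat{\A}$ (take $t=0$) and $\Phi|_{\M}=\mathrm{id}$, so $\A=\Phi(\A)\subset\Phi(\widehat{\A}\,)$. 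For the second identity, I would note $\Phi_n(\widehat{\A}\,)\subset\widehat{\A}\cap\R_n=\A_n$: the inclusion into $\R_n$ is clear since $\Phi_n$ projects onto $\R_n$, while the inclusion into $\widehat{\A}$ follows from the integral formula \eqref{eq:conditionexpectation-n}, because each $\sigma_t^{\varphi_n}$ is an automorphism carrying $\widehat{\A}$ into itself (using the subdiagonal maximality of $\widehat{\A}$ in $\R$ with respect to $\widehat{\E}$, i.e.\ $\sigma_t^{\varphi_n}(\widehat{\A}\,)=\widehat{\A}$), so the $\sigma$-weakly convergent average stays in the closed space $\widehat{\A}$. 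Conversely $\A_n=\widehat{\A}\cap\R_n\subset\R_n$ is fixed pointwise by $\Phi_n$, giving $\A_n=\Phi_n(\A_n)\subset\Phi_n(\widehat{\A}\,)$.

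\medskip

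\textbf{Claim (2): $\bigcup_{n\ge1}\A_n$ is $\sigma$-weakly dense in $\widehat{\A}$.}
Here I would start from any $x\in\widehat{\A}$ and use property (3) of the reduction theorem: $\Phi_n(x)\to x$ $\sigma$-strongly, hence $\sigma$-weakly. By Claim (1) we have $\Phi_n(x)\in\A_n$ for every $n$, so $x$ lies in the $\sigma$-weak closure of $\bigcup_n\A_n$. Since $x\in\widehat{\A}$ was arbitrary and each $\A_n\subset\widehat{\A}$, this proves density.

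\medskip

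\textbf{Claim (3): $\widehat{\E}(\M)\subset\M$.}
For this I would use the defining formula \eqref{conditional expectation}: for $x\in\M=\pi(\M)$, writing $x=\lambda(0)x$ gives $\widehat{\E}(x)=\lambda(0)\E(x)=\E(x)\in\mathcal{D}\subset\M$, so the inclusion is in fact immediate on the identified copy of $\M$. The one point to check carefully is that the element of $\M$ we start with is genuinely of the form $\lambda(0)\pi(x)$ so that \eqref{conditional expectation} applies directly.

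\medskip

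The only genuine obstacle is the closure/normality argument inside Claim (1): one must be sure that $\sigma_t^{\varphi_n}$ preserves $\widehat{\A}$ so that the average $\int_0^1\sigma_t^{\varphi_n}(x)\,dt$ remains in $\widehat{\A}$, and that the $\sigma$-weak integral of a $\widehat{\A}$-valued function stays in the $\sigma$-weakly closed set $\widehat{\A}$. The invariance $\sigma_t^{\varphi_n}(\widehat{\A}\,)=\widehat{\A}$ follows from \eqref{eq:modular} together with the maximality criterion \eqref{maximal} applied to the subdiagonal algebra $\widehat{\A}$ of $\R$; once that is in hand, everything else reduces to the normality of $\Phi,\Phi_n$ and the reduction-theorem convergence, which are routine.
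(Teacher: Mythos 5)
Your proposal is correct and follows essentially the same route as the paper: $\Phi(\widehat{\A}\,)=\A$ from \eqref{conditional-expectation1}, $\Phi_n$-invariance of $\widehat{\A}$ via \eqref{eq:modular} and the averaging formula \eqref{eq:conditionexpectation-n} combined with the sandwich $\A_n=\Phi_n(\A_n)\subset\Phi_n(\widehat{\A}\,)\subset\widehat{\A}\cap\R_n=\A_n$, then (2) from the $\sigma$-strong convergence $\Phi_n(x)\to x$, and (3) directly from \eqref{conditional expectation}. You simply spell out the generator computation and the closure/normality points in more detail than the paper does.
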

\begin{proof}
(1) By \eqref{conditional-expectation1}, we know  that $\Phi(\widehat{\A}\:)=\A$. Since $\A$ is $\sigma_{t}^{\varphi}$-invariant, by \eqref{eq:modular} and \eqref{eq:conditionexpectation-n},
we conclude that $\widehat{\A}$ is $\Phi_n$-invariant for all $n\in\mathbb{N}$. It is clear that
 $$
 \A_n=\Phi_n(\A_n)\subset\Phi_n(\widehat{\A}\:)\subset \widehat{\A} \cap\R_n=\A_n,
 $$
 so it follows that $\A_n=\Phi_n(\widehat{\A}\:)$.

(2) For any $x\in\widehat{\A}$, $\Phi_n(x)$ converges to $x$ $\sigma$-strongly as $n\rightarrow\8$. Hence, (2)  follows from (1).

(3) follows from \eqref{conditional expectation}.

\end{proof}

As  an extension of Theorem 3.1 in \cite{H2}, we have the following.

 \begin{theorem}\label{lem:denseHp} Let $\M,\; \A,\;\E,\;\R,\;\widehat{\A},\;\widehat{\E},\;\Phi,\;\A_n,\;\Phi_n\; (\forall n\in \mathbb{N})$ be fixed as in the above and let $0 < p < \8$.
\begin{enumerate}
\item $\{H^p(\mathcal{A}_n)\}_{n\ge1}$ is a  increasing sequence of subspaces of $H^p(\widehat{\A}\:)$;
\item $\bigcup_{n\ge1}H^p(\mathcal{A}_n)$  is dense in  $H^p(\widehat{\A}\:)$;
\item For each $n$, $H^p(\mathcal{A}_n)$ is isometric to the usual
noncommutative $H^p$-space associated with a finite
subdiagonal algebra.
\item $H^p(\mathcal{A})$  and all   $H^p(\mathcal{A}_n)$
are 1-complemented in $H^p(\widehat{\A}\:)$ for $1\le p <\8$.
\end{enumerate}
 \end{theorem}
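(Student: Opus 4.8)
The plan is to realise all four spaces concretely inside the single Haagerup space $L^p(\R)$ attached to the state $\widehat\varphi$, and to transport the assertions across via the state-preserving conditional expectations $\Phi_n\colon\R\to\R_n$ and $\Phi\colon\R\to\M$. Since the Radon--Nikodym derivative $D$ used to build $L^p(\R_n)$ agrees with the one for $L^p(\R)$ (as recorded in the discussion preceding Proposition \ref{pro:Hp-subspace}), Definition \ref{def:hp} gives $H^p(\A_n)=[\A_nD^{1/p}]_p$ and $H^p(\widehat\A)=[\widehat\A D^{1/p}]_p$ computed with one and the same $D$.

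For part (1) I would argue directly from the definitions. Because $\R_n\subset\R_{n+1}$ we have $\A_n=\widehat\A\cap\R_n\subset\widehat\A\cap\R_{n+1}=\A_{n+1}$, so $\A_nD^{1/p}\subset\A_{n+1}D^{1/p}\subset\widehat\A D^{1/p}$ inside $L^p(\R)$ and the closures $[\A_nD^{1/p}]_p$ form an increasing chain of closed subspaces of $H^p(\widehat\A)$. That each $[\A_nD^{1/p}]_p$ is an isometric copy of the $H^p$-space of the maximal subdiagonal algebra $\A_n$ of $\R_n$ is Proposition \ref{pro:Hp-subspace} applied to the pair $\R_n\subset\R$; its hypotheses hold because $\R_n$ is $\sigma^{\widehat\varphi}$-invariant (from \eqref{eq:modular} and $a_n\in\mathcal{Z}(\R_{\widehat\varphi})$, which give $\sigma_s^{\widehat\varphi}(x)=e^{isa_n}xe^{-isa_n}\in\R_n$ for $x\in\R_n$), $\R_n$ is $\widehat\E$-invariant (from $\widehat\E\circ\Phi_n=\Phi_n\circ\widehat\E$ and $\Phi_n|_{\R_n}=\mathrm{id}$), and $\widehat\A$ is $\Phi_n$-invariant by Lemma \ref{sequence-sudgiagonal}(1). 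For part (3) I would use that $\R_n$ is finite with tracial state $\tau_n$: the Haagerup space $L^p(\R_n)$ built from $\widehat\varphi|_{\R_n}$ is isometric, for every $0<p<\8$, to the tracial space $L^p(\R_n,\tau_n)$ through the canonical map carrying $D^{1/p}$ to $d^{1/p}$, where $d$ is the density of $\widehat\varphi|_{\R_n}$ with respect to $\tau_n$; this map sends $[\A_nD^{1/p}]_p$ onto the usual noncommutative $H^p$-space of the finite subdiagonal algebra $(\R_n,\A_n,\tau_n)$.

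Parts (2) and (4) both rest on promoting the conditional expectations to the $L^p$-scale. First, Proposition \ref{pro:Hp-subspace} applied to $\M\subset\R$—whose hypotheses hold since $\sigma_t^{\widehat\varphi}(\M)=\M$, $\widehat\E(\M)\subset\M$ by Lemma \ref{sequence-sudgiagonal}(3), and $\Phi(\widehat\A)=\A$—identifies $H^p(\A)$ isometrically with the subspace $[\A D^{1/p}]_p$ of $H^p(\widehat\A)$, so that (4) is meaningful. Next, invoking the $L^p$ form of Haagerup's reduction theorem—the result of \cite{H2} that the present theorem extends—each $\Phi_n$ extends to a projection of $L^p(\R)$ onto $L^p(\R_n)$ with $\Phi_n\xi\to\xi$ in $L^p$ for every $\xi$ and every $0<p<\8$, the projection being contractive for $1\le p<\8$; similarly $\Phi$ extends to a contractive projection of $L^p(\R)$ onto $L^p(\M)$ for $1\le p<\8$. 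By continuity together with $\Phi_n(\widehat\A)=\A_n$ and $\Phi(\widehat\A)=\A$ (Lemma \ref{sequence-sudgiagonal}(1)), these extensions carry the dense set $\widehat\A D^{1/p}$ into $\A_nD^{1/p}$, respectively $\A D^{1/p}$, hence map $H^p(\widehat\A)$ into $H^p(\A_n)$, respectively $H^p(\A)$; acting as the identity on these targets they are norm-one projections, which gives (4), while $\Phi_n\xi\to\xi$ for $\xi\in H^p(\widehat\A)$ yields the density in (2).

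The delicate point, and the main obstacle, is the passage from the algebraic identities $\Phi_n(\widehat\A)=\A_n$ to their $L^p$-analogues: one must verify that the $L^p$-extension of $\Phi_n$ is compatible with right multiplication by $D^{1/p}$—equivalently that it sends $aD^{1/p}$ into $[\A_nD^{1/p}]_p$ for every $a\in\widehat\A$—so that $H^p(\widehat\A)$ is genuinely left invariant and the restriction is a projection onto the right subspace. This is also precisely what confines part (4) to the range $1\le p<\8$, where the state-preserving conditional expectations extend to honest norm-one projections on the Banach spaces $L^p(\R)$, as opposed to the merely quasi-Banach regime $0<p<1$.
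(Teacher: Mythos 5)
Your handling of parts (1), (3) and (4) is essentially the paper's own route: the isometric identifications in (1) and the meaningfulness of (4) come from Proposition \ref{pro:Hp-subspace} applied to $\R_n\subset\R$ and to $\M\subset\R$ (the paper leaves the hypothesis-checking implicit, and your verification that $\R_n$ is $\sigma^{\widehat{\varphi}}$- and $\widehat{\E}$-invariant and that $\widehat{\A}$ is $\Phi_n$-invariant is exactly what is needed); (3) rests on the finiteness of $\R_n$; and (4) is obtained from the $L^p$-extensions of the state-preserving expectations $\Phi_n$ and $\Phi$ together with their module property, which is precisely the content of Lemma 2.2 in \cite{JX} that the paper cites --- your ``delicate point'' about compatibility with right multiplication by $D^{1/p}$ is what that lemma settles.

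The genuine gap is in part (2), on the range $0<p<1$. You derive the density from the claim that each $\Phi_n$ extends to a projection of $L^p(\R)$ onto $L^p(\R_n)$ with $\Phi_n\xi\to\xi$ for every $\xi$ and every $0<p<\8$. No such extension exists when $0<p<1$: state-preserving conditional expectations are not bounded on $L^p$ for $p<1$ (already in the commutative case, $f\mapsto\int f$ on $L^p[0,1]$ is unbounded on the dense domain where it is defined), so the densely defined map $D^{\frac{1}{2p}}xD^{\frac{1}{2p}}\mapsto D^{\frac{1}{2p}}\Phi_n(x)D^{\frac{1}{2p}}$ does not extend to all of $L^p(\R)$; this is exactly why the reduction theorem of \cite{H2} asserts complementation only for $1\le p<\8$ while still asserting density for all $0<p<\8$. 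Consequently $\Phi_n\xi$ is undefined for general $\xi\in H^p(\widehat{\A}\:)$ when $p<1$, and your argument only covers $1\le p<\8$. The repair is the approximation lemma the paper cites (Lemma 2.2 in \cite{Ju}): if a sequence $(x_k)$ is uniformly bounded in operator norm and converges strong$^*$ to $x$, then $\|x_kh-xh\|_p\to0$ for every $h\in L^p(\R)$ and every $0<p<\8$. Applied with $x_k=\Phi_k(a)\in\A_k$ (bounded by $\|a\|_{\8}$ and converging strong$^*$ to $a$) and $h=D^{1/p}$, it gives $\Phi_k(a)D^{1/p}\to aD^{1/p}$ in $L^p$ for every $a\in\widehat{\A}$, so $\bigcup_k\A_kD^{1/p}$ is dense in $\widehat{\A}D^{1/p}$ and hence in $H^p(\widehat{\A}\:)$, without ever needing $\Phi_k$ to act boundedly on $L^p$.
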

\begin{proof}

(1) By Proposition \ref{pro:Hp-subspace}, $H^p(\A)$ and all  $H^p(\A_n)$ are naturally isometrically identified as subspaces of  $H^p(\widehat{\A}\:)$ for  $0 < p < \8$, and the sequence
$\{H^p(\mathcal{A}_n)\}_{n\ge1}$ is increasing.

(2) By Lemma
2.2 in \cite{Ju}, $\cup_{n\ge1}H^p(\mathcal{A}_n)$  is dense in  $H^p(\widehat{\A}\:)$  for $0 < p < \8$.

(3) Since $\mathcal{R}_n$ is a finite
von Neumann algebra, the result holds (see Remark \ref{rk:independent}).

(4) Using (1) of Lemma \ref{sequence-sudgiagonal} and Lemma 2.2 in \cite{JX} we obtain the desired results.
\end{proof}
Let
$$1\le p\le\8,\;\frac{1}{p}+\frac{1}{q}=1.
$$
If $x\in L^{p}(\mathcal{M})$ and $\tr(xa)=0, \;\forall a\in H^q(\A)$, we write  $x\perp J(H^q(\A))$.
Similarly, we define $x\perp J(H^q_0(\A))$.

\begin{proposition}\label{carecterizationh1} Let $\mathcal{A}$ be a  maximal subdiagonal algebra of
$\mathcal{M}$ with respect to $\mathcal{E}.$ Then, we have
\begin{enumerate}
\item
$
 \mathcal{A}=\{x\in\mathcal{M}:\;x\bot J(H^{1}_{0}(\A))\},\quad\mathcal{A}_{0}=\{x\in\mathcal{M}:\;x\bot J(H^{1}(\A))\}.
$
\item
$
H^{1}(\A)=\{x\in L^{1}(\mathcal{M}):\;x\bot J(\mathcal{A}_{0})\},\quad H^{1}_{0}(\A)=\{x\in L^{1}(\mathcal{M}):\;x\bot J( \mathcal{A})\}.
$
\end{enumerate}
\end{proposition}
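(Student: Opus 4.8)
The plan is to rewrite every orthogonality condition as a statement about the state $\varphi$, to dispose of the ``easy'' inclusions using multiplicativity of $\E$, and then to observe that the four identities form two bipolar pairs, so that a single inclusion carries the whole proposition. Since $\varphi(y)=\tr(Dy)$ and $\tr$ is cyclic, for $x\in\M$, $a\in\A$ and $a_0\in\A_0$ we have $\tr(x\,a_0D)=\varphi(xa_0)$, $\tr(x\,aD)=\varphi(xa)$, and on the $L^1$-side $\tr(a_0D\,a)=\varphi(aa_0)$, $\tr(aD\,a_0)=\varphi(a_0a)$. Because $\E$ is multiplicative on $\A$ and $\varphi=\varphi\circ\E$, each of these vanishes once one factor lies in $\A_0$ (e.g. $\varphi(xa_0)=\varphi(\E(x)\E(a_0))=0$ when $x,a_0\in\A$). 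With the continuity of $\tr$ and the density defining $H^1(\A),H^1_0(\A)$, this gives at once the four easy inclusions $\A\subseteq\{x\in\M:x\perp J(H^1_0(\A))\}$, $\A_0\subseteq\{x\in\M:x\perp J(H^1(\A))\}$, $H^1(\A)\subseteq\{x\in L^1(\M):x\perp J(\A_0)\}$ and $H^1_0(\A)\subseteq\{x\in L^1(\M):x\perp J(\A)\}$.

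Now $\A$ and $\A_0=\A\cap\ker\E$ are w*-closed (normality of $\E$), while $H^1(\A),H^1_0(\A)$ are norm-closed. Under the duality $\M=(L^1(\M))'$ induced by $\tr$, the bipolar theorem gives $(\A_{\perp})^{\perp}=\A$ and $(T^{\perp})_{\perp}=T$ for every norm-closed $T\subseteq L^1(\M)$, where $\A_\perp=\{x\in L^1(\M):\tr(xa)=0\ \forall a\in\A\}$. Hence the identity $H^1_0(\A)=\{x:x\perp J(\A)\}$ is equivalent, by taking annihilators, to $\A=\{x:x\perp J(H^1_0(\A))\}$, and symmetrically $\A_0=\{x:x\perp J(H^1(\A))\}$ is equivalent to $H^1(\A)=\{x:x\perp J(\A_0)\}$. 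Thus it suffices to prove the single inclusion $\A_\perp\subseteq H^1_0(\A)$: the first pair follows by bipolarity, and then the $\A_0$ pair follows from a short faithfulness argument. Indeed, if $\varphi(xa)=0$ for all $a\in\A$, then restricting to $a\in\A_0$ and using the (already established) first identity gives $x\in\A$; choosing $a=\E(x)^{*}\in\mathcal{D}\subseteq\A$ yields $\varphi(\E(x)\E(x)^{*})=0$, so $\E(x)=0$ by faithfulness of $\varphi$, i.e. $x\in\A_0$.

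For the remaining inclusion I would invoke Haagerup's reduction. Let $x\in L^1(\M)$ with $\tr(xa)=0$ for all $a\in\A$, and regard $x\in L^1(\R)$. Recall $\Phi_n\colon\R\to\R_n$ and $\Phi\colon\R\to\M$, trace-preserving with $\Phi_n(\widehat{\A})=\A_n$, $\Phi(\widehat{\A})=\A$ (Lemma~\ref{sequence-sudgiagonal}), each extending to a contractive projection on the $L^1$-spaces with $\tr_{\R}=\tr\circ\Phi$. For $c\in\A_n\subseteq\widehat{\A}\subseteq\R_n$, the module property gives $\tr_{\R}(\Phi_n(x)c)=\tr_{\R}(\Phi_n(xc))=\tr_{\R}(xc)=\tr(x\,\Phi(c))=0$, since $\Phi(c)\in\A$. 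Thus $\Phi_n(x)$ annihilates $\A_n$; as $\A_n$ is a finite maximal subdiagonal algebra (Theorem~\ref{lem:denseHp}(3)), the classical finite-trace characterization (Marsalli--West \cite{MW1}; see also \cite{BL1}) forces $\Phi_n(x)\in H^1_0(\A_n)\subseteq H^1_0(\widehat{\A})$. Letting $n\to\8$, $L^1$-martingale convergence attached to the reduction gives $\Phi_n(x)\to x$ in $L^1(\R)$, and $H^1_0(\widehat{\A})$ is closed, so $x\in H^1_0(\widehat{\A})$; finally $\Phi$ fixes $L^1(\M)$ and carries $H^1_0(\widehat{\A})$ onto $H^1_0(\A)$, whence $x=\Phi(x)\in H^1_0(\A)$.

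The finite-case input is classical, so the real work is the faithful transfer of the annihilator condition through $\Phi$ and the $\Phi_n$; the main obstacle is checking, in the Haagerup framework, that these extend to trace-preserving contractive projections on the $L^1$-spaces carrying $H^1_0(\widehat{\A})$ onto $H^1_0(\A)$ (resp. $H^1_0(\A_n)$), that $\Phi_n(x)\to x$ in $L^1(\R)$, and that the module identities survive for the unbounded affiliated operator $D$. These are implicit in Section~2 (Lemma~\ref{sequence-sudgiagonal}, Theorem~\ref{lem:denseHp}, and \cite{H2,X}), but assembling them so that the single finite identity propagates cleanly to $\M$ is the delicate point. As a consistency check one may argue directly in $L^2(\M)$: the computations above show $\A_0D^{1/2}$, $\mathcal{D}D^{1/2}$, $J(\A_0)D^{1/2}$ are mutually orthogonal and, by w*-density of $\A+J(\A)$, span $L^2(\M)$, so $H^2_0(\A)^{\perp}=J(H^2(\A))$ and the hypothesis reads $x^{*}D^{1/2}\perp H^2_0(\A)$, forcing $D^{1/2}x\in H^2(\A)$; turning this into $x\in\A$ needs the membership lemma $\{x\in\M:D^{1/2}x\in H^2(\A)\}=\A$, which is of the same depth as the statement itself, so I would ultimately rely on the reduction argument.
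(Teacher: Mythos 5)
Your proposal is correct in outline, but it reaches the conclusion by a genuinely different route than the paper. The two agree on the easy inclusions (multiplicativity of $\E$ together with $tr\circ\E=tr$), and your bipolarity reduction is essentially the paper's part (ii) read backwards: the paper proves (i) first and then derives (ii) by a Hahn--Banach separation (if $x\perp J(\A_0)$ but $x\notin H^{1}(\A)$, pick $y\in\M=(L^{1}(\M))'$ with $tr(y^{*}x)=1$ and $y\perp H^{1}(\A)$, apply (i) to get $y^{*}\in\A_{0}$, contradiction), whereas you prove the $L^{1}$-side identity first and recover the $\M$-side by the bipolar theorem. The real divergence is the remaining hard inclusion. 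The paper handles the converse of the first identity in (i) inside $L^{2}$: from $x\perp J(H^{1}_{0}(\A))$ and \eqref{equalityHp} it gets $tr(xD^{\frac12}abD^{\frac12})=0$ for all $a\in\A$, $b\in\A_{0}$, hence $xH^{2}(\A)\perp J(H^{2}_{0}(\A))$; the decomposition $L^{2}(\M)=H^{2}(\A)\oplus J(H^{2}_{0}(\A))$ then forces $xH^{2}(\A)\subseteq H^{2}(\A)$, and Theorem 2.2 of \cite{JS} --- exactly the ``membership lemma'' you identify as the crux and decline to prove --- gives $x\in\A$. You instead prove $\{x\in L^{1}(\M):x\perp J(\A)\}\subseteq H^{1}_{0}(\A)$ by Haagerup reduction: transfer the annihilation condition to the finite algebras $\A_{n}$ through the trace-preserving bimodule maps $\Phi_{n}$, invoke the classical finite-trace characterization of $H^{1}_{0}$, and pass to the limit using $L^{1}$-convergence of $\Phi_{n}(x)$. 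This is a viable alternative, consistent with the machinery of Section 2 (Lemma \ref{sequence-sudgiagonal}, Theorem \ref{lem:denseHp}), but its price is exactly the list of verifications you flag: the $L^{1}$-extensions of $\Phi$ and $\Phi_{n}$, their module and trace-preserving properties, the convergence $\Phi_{n}(x)\to x$ in $L^{1}(\R)$, and the identification of the Haagerup $H^{1}_{0}(\A_{n})$ with its tracial counterpart --- all available in \cite{H2,JX} but none needed by the paper, which outsources the hard analytic content to the prior $\sigma$-finite $H^{2}$ results of \cite{JS,J2}. Your short faithfulness argument deducing $\E(x)=0$ from $\varphi(\E(x)\E(x)^{*})=0$ correctly settles the $\A_{0}$ identity, which the paper dispatches with ``the proof is similar.''
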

\begin{proof} (i)  Assume $x\in \mathcal{A},$ if $y^{\ast}\in J(H^{1}_{0}(\A))$, then there
exist $\{a^{\ast}_{n}\}\subset \mathcal{A}_{0}$ such that
$y=\lim_{n\rightarrow\infty}a_{n}D$. Hence, by \eqref{eq:conditional expectation},
$$
tr(xy)=\lim_{n\rightarrow\infty}tr(xa_{n}D)=
\lim_{n\rightarrow\infty}tr(\mathcal{E}(xa_{n})D)=0,
$$
whence $\mathcal{A}\subset \{x\in\mathcal{M},\;x\bot J(H^{1}_{0}(\A))\}$.

Conversely, we take $x\in\mathcal{M},\;x\bot J(H^{1}_{0}(\A))$. By \eqref{equalityHp},
$$
tr(xD^\frac{1}{2}aD^\frac{1}{2})=0,\; \forall a\in  \mathcal{A}_{0}.
$$
 Hence
$$
tr(xD^{\frac{1}{2}}abD^{\frac{1}{2}})=0,\quad\forall b\in\A_0,\;\forall a\in \mathcal{A}.
$$
Using \eqref{equalityHp}, we obtain that $xH^{2}(\A)\bot J( H^{2}_{0}(\A))$. Since
$$
L^{2}(\mathcal{M})=H^{2}(\A)\oplus J(H^{2}_0(\A)),\quad xH^{2}(\A)\subseteq  H^{2}(\A).
$$
By Theorem 2.2 in \cite{JS},
$x\in\A$. Thus we proved the
first equality. The proof of second equality is similar to that of the first equality.

(ii)  It is clear that $H^{1}(\A)\subset\{x\in L^{1}(\mathcal{M}):\;x\bot J(\mathcal{A}_{0})\}$. Conversely, if $x\in L^{1}(\mathcal{M})$, $x\bot J(\mathcal{A}_{0})$ and $x\notin H^{1}(\A)$, then there
exist $y\in \mathcal{M}$ such that $tr(y^{\ast}x)=1$ and $y\bot
H^{1}(\A)$. By (i), $y^{\ast}\in
\mathcal{A}_{0}$. Therefore, $tr(y^{\ast}x)=0$. This
is a contradiction, so the first equality holds.   The proof of the second equality is similar.
\end{proof}

By Theorem 3.3 in \cite{J2}, we know that
\be
L^{p}(\mathcal{M}))=H^{p}(\A)\oplus J(H^{p}_0(\A))
\ee
for all $1< p<\infty$.
Hence, if  $\frac{1}{p}+\frac{1}{q}=1$, then
\beq\label{Hp-caracterization}
H^{p}(\A)=\{x\in L^{p}(\mathcal{M}):\;x\perp
J(H^{q}_{0}(\A))\}
\eeq
and
\be
  H^{p}_{0}(\A)=\{x\in L^{p}(\mathcal{M}):\;x\perp
J(H^{q}(\A))\}.
\ee
\begin{proposition}\label{charecterization-pro} Let  $1\le r,\;p,\;s\le\infty,\;\frac{1}{p}+\frac{1}{s}=\frac{1}{r}$ and $0\le\eta\le1$. Then
\begin{enumerate}
\item  $H^{p}(\A)=\{a\in L^{p}(\mathcal{M}):\;
D^{\frac{\eta}{s}}aD^{\frac{1-\eta}{s}}\in H^{r}(\A)\}.$
\item $H^{p}_{0}(\A)=\{a\in L^{p}(\mathcal{M}):\; D^{\frac{\eta}{s}}aD^{\frac{1-\eta}{s}}\in
H^{r}_{0}(\A)\}.$
\end{enumerate}
\end{proposition}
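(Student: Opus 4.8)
The plan is to prove each equality by a double inclusion; I will carry out the argument for (1) and indicate at the end how (2) follows by the same scheme. Write $\Lambda_\eta(x)=D^{\eta/s}\,x\,D^{(1-\eta)/s}$, and note that $D^{\eta/s}\in L^{s/\eta}(\M)$, $D^{(1-\eta)/s}\in L^{s/(1-\eta)}(\M)$, with $tr(D)=\varphi(1)=1$ forcing $\|D^{\eta/s}\|_{s/\eta}=\|D^{(1-\eta)/s}\|_{s/(1-\eta)}=1$. Since $\frac{\eta}{s}+\frac1p+\frac{1-\eta}{s}=\frac1r$, Hölder's inequality shows that $\Lambda_\eta$ maps $L^p(\M)$ contractively into $L^r(\M)$. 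The cases $s=\infty$ and $r=\infty$ are trivial, so I assume $r,s<\infty$.

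For the inclusion $H^p(\A)\subseteq\{a:\Lambda_\eta(a)\in H^r(\A)\}$ I would evaluate $\Lambda_\eta$ on the dense generating set $D^{\frac{1-\eta}{p}}\A D^{\frac{\eta}{p}}$ of $H^p(\A)$ supplied by \eqref{equalityHp}. For $a\in\A$ one computes
\[
\Lambda_\eta(D^{\frac{1-\eta}{p}}aD^{\frac{\eta}{p}})
=D^{\frac{\eta}{s}+\frac{1-\eta}{p}}\,a\,D^{\frac{\eta}{p}+\frac{1-\eta}{s}},
\]
where the two exponents of $D$ are nonnegative and sum to $\frac1s+\frac1p=\frac1r$; hence this element lies in $H^r(\A)$ by \eqref{equalityHp}. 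As $\Lambda_\eta$ is $L^p$--$L^r$ continuous and $H^r(\A)$ is closed, passing to the closure gives $\Lambda_\eta(H^p(\A))\subseteq H^r(\A)$.

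The reverse inclusion is the substantive part, and I would obtain it by duality. Let $q$ and $r'$ be the conjugate indices of $p$ and $r$, so that $\frac1q=\frac1{r'}+\frac1s$. By \eqref{Hp-caracterization} (and by Proposition \ref{carecterizationh1} in the endpoint cases $p=\infty$ or $r=1$), one has $a\in H^p(\A)$ precisely when $tr(ac)=0$ for all $c\in H^q_0(\A)$. Assuming $\Lambda_\eta(a)\in H^r(\A)$, the analogous characterization of $H^r(\A)$ gives $tr(\Lambda_\eta(a)\,d)=0$ for every $d\in H^{r'}_0(\A)$, which by traciality of $tr$ becomes
\[
tr(a\,D^{\frac{1-\eta}{s}}d\,D^{\frac{\eta}{s}})=0,\qquad \forall\, d\in H^{r'}_0(\A).
\]
The whole matter thus reduces to showing that $\{D^{\frac{1-\eta}{s}}d\,D^{\frac{\eta}{s}}:d\in H^{r'}_0(\A)\}$ is dense in $H^q_0(\A)$; granting this, $tr(ac)=0$ extends by continuity to all $c\in H^q_0(\A)$ and $a\in H^p(\A)$ follows.

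For the density I would once more invoke \eqref{equalityHp}: choosing $d=a_0D^{1/r'}\in H^{r'}_0(\A)$ with $a_0\in\A_0$ yields
\[
D^{\frac{1-\eta}{s}}a_0D^{\frac1{r'}+\frac{\eta}{s}}
=D^{\frac{1-\nu}{q}}a_0D^{\frac{\nu}{q}},\qquad \nu=1-\tfrac{q(1-\eta)}{s}\in[0,1],
\]
and the $L^q$--closure of $D^{\frac{1-\nu}{q}}\A_0 D^{\frac{\nu}{q}}$ is exactly $H^q_0(\A)$ by \eqref{equalityHp}; since these elements belong to the set in question, the required density follows (when $r=1$, i.e. $r'=\infty$, one uses $H^{r'}_0(\A)=\A_0$ and Proposition \ref{carecterizationh1} directly). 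Statement (2) is obtained by the same two steps, now using the characterizations $H^p_0(\A)=\{x\in L^p(\M):tr(xc)=0,\ \forall c\in H^q(\A)\}$ and $H^r_0(\A)=\{x:tr(xd)=0,\ \forall d\in H^{r'}(\A)\}$ together with the corresponding density of $\{D^{\frac{1-\eta}{s}}d\,D^{\frac{\eta}{s}}:d\in H^{r'}(\A)\}$ in $H^q(\A)$. I expect the only genuine difficulty to be the bookkeeping: checking at each application of \eqref{equalityHp} that the exponents of $D$ remain nonnegative and sum correctly, verifying $\nu\in[0,1]$, and routing the endpoint indices $p\in\{1,\infty\}$ and $r\in\{1,\infty\}$ through Proposition \ref{carecterizationh1} rather than \eqref{Hp-caracterization}.
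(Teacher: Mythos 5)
Your proposal is correct and follows essentially the same route as the paper: the forward inclusion via \eqref{equalityHp} and continuity (which the paper simply declares clear), and the reverse inclusion by the annihilator characterizations of $H^p(\A)$ together with traciality of $tr$ and the identification $D^{\frac{1-\eta}{s}}\bigl(D^{(1-\nu')/r'}\A_0 D^{\nu'/r'}\bigr)D^{\frac{\eta}{s}}=D^{(1-\nu)/q}\A_0 D^{\nu/q}$ from \eqref{equalityHp}. Your exponent bookkeeping (including $\nu\in[0,1]$) and the routing of the endpoint indices through Proposition \ref{carecterizationh1} are sound and merely make explicit what the paper leaves implicit.
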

\begin{proof} We prove only (i). The proof of (ii) is similar. It is
clear that
$$
H^{p}(\A)\subseteq \{a\in L^{p}(\mathcal{M}):\;
D^{\frac{\eta}{s}}aD^{\frac{1-\eta}{s}}\in H^{r}(\A)\}.
$$
If $x\in
\{a\in L^{p}(\mathcal{M}):\; D^{\frac{\eta}{s}}aD^{\frac{1-\eta}{s}}\in
H^{r}(\A)\}$, then
$$
tr(D^{\frac{\eta}{s}}xD^{\frac{1-\eta}{s}}b)=0,\quad \forall
b\in H^{r'}_{0}(\A),
$$
where $\frac{1}{r}+\frac{1}{r'}=1$. Therefore, by \eqref{equalityHp},
$$
tr(xb)=0,\quad\forall b\in H^{q}_{0}(\A),
$$
where $\frac{1}{p}+\frac{1}{q}=1$.
Using \eqref{Hp-caracterization} or Lemma \ref{carecterizationh1} we get $x\in H^{p}(\A)$, hence
 the desired result holds.
\end{proof}

\section{ Complex interpolation theorem of the Haagerup
 $H^{p}$-spaces}

First we briefly recall the complex interpolation method will use in this
section. Our main reference is \cite{BL}. Let $X=(X_{0},X_{1})$ be a pair of
two compatible Banach spaces with norms $\|.\|_{X_{0}}=\|.\|_{0}$ and
$\|.\|_{X_{1}}=\|.\|_{1},$ respectively. The algebraic sum
$\Sigma(X_{0},X_{1})=X_{0}+X_{1}$ is a Banach space under the norm
$$
\|x\|_{\Sigma}=\inf\{\|x_{0}\|_{0}+\|x_{1}\|_{1}:\;x=x_{0}+x_{1},\;x_{j}\in
X_{j},\;j=0,1 \}.
$$
One defines a space  of certain $\Sigma(X_{0},X_{1})$-valued functions on the
strip $0\leq\mbox{Re}z\leq1$ as $F(X_{0},X_{1})$, which consists of functions
$f:0\leq\mbox{Re}z\leq1\mapsto\sum(X_{0},X_{1})$ satisfying:
\begin{enumerate}
  \item $f$ is bounded and continuous, and analytic in the interior( with respect to
$\|.\|_{\Sigma}$ ),
  \item $f(j+it)\in X_{j},\;t\in\mathbb{R},\;j=0,1,$
  \item for $j=0,1,$ the map $t\in\mathbb{R}\mapsto f(j+it)\in X_{j}$ is
$\|.\|_{j}$-continuous and $\lim_{t\rightarrow\pm\infty}\|f(j+it)\|_{j}=0.$
\end{enumerate}

It follows from the Phragm\'{e}n-Lindel\"{o}f theorem that the space $F(X_{0},X_{1})$
is Banach space under the norm
$$
\||f|\|=\max\{\sup_{t\in\mathbb{R}}\|f(it)\|_{0},\;\sup_{t\in\mathbb{R}}\|f(1+it)\|_{1}\}.
$$

\begin{definition}  For each $0<\theta<1,$ the complex interpolation
space $C_{\theta}(X_{0},X_{1})$ is the set of all $f(\theta),\;f\in
F(X_{0},X_{1}),$ equipped with the complex interpolation norm
$$
\|x\|_{C_{\theta}(X_{0},X_{1})}=\inf\{\||f|\|:\;f\in F(X_{0},X_{1}),\;f(\theta)=x\}.
$$
\end{definition}

 As in \cite{Kos},
for each $0\leq\eta\leq1$,   we define the imbedding $i_{\8}^{\eta}: \M\hookrightarrow L^{p_0}(\mathcal{M})$
by
$$
  i_{\8}^{\eta}(x)=D^{\eta}xD^{1-\eta},\quad \forall x\in \M.
$$
Let  $\M^\eta=i_{\8}^{\eta}(\M)=D^{\eta}\M D^{1-\eta}$. Then  $\M^\eta\subseteq L^{1}(\mathcal{M})$.
We  write (unless confusion occurs)
$$
 \|i_{\8}^{\eta}(x)\|_{\8}^{\eta}=
 \|D^{\eta}xD^{1-\eta}\|_{\8}^{\eta}=\|x\|_{\8},\quad\|x\|_{1}^\eta=\|D^{\eta}xD^{1-\eta}\|_{1},\quad\forall
 x\in \M.
 $$
 In what follows, we never consider a power of $\|\; \|_{\8},\;\|\; \|_{1}$, so that the above notations will
never make confusion. Since
$$
\|i_{\8}^{\eta}(x)\|_{1}^{\eta}=\|D^{\eta}xD^{1-\eta}\|_{1}\leq\|x\|_{\8}=
 \|D^{\eta}xD^{1-\eta}\|_{\8}^{\eta}=\|i_{\8}^{\eta}(x)\|_{\8}^{\eta}
$$
for all $x\in \M$,
$\|.\|_{1}^{\eta}\leq\|.\|_{\8}^{\eta}$ on the subspace $\M^\eta$
of $L^{1}(\mathcal{M})$.  Therefore, the pair $(\M^\eta, L^{1}(\mathcal{M}))$ is compatible and satisfies
$$
\Sigma(\M^\eta, L^{1}(\mathcal{M}))=L^{1}(\mathcal{M}),\quad
 \M^\eta\cap L^{1}(\mathcal{M})=\M^\eta, \quad
   \|.\|_{\Sigma}=\|.\|_{1}.
$$
If $1<p<\8$, then for each $0\leq\eta\leq1$,  we imbed $L^{p}(\mathcal{M})$ into
$L^{1}(\mathcal{M})$ via
$$
  i_{p}^{\eta}: x\in L^{p}(\mathcal{M})\rightarrow D^{\frac{\eta}{q}}xD^{\frac{1-\eta}{q}}\in
  L^{1}(\mathcal{M}),
$$
where $\frac{1}{p}+\frac{1}{q}=1$. We have that $i_{p}^{\eta}(
L^{p}(\mathcal{M}))=D^{\frac{\eta}{q}}L^{p}(\mathcal{M})D^{\frac{1-\eta}{q}}\subseteq L^{1}(\mathcal{M})$.
We also write (unless confusion occurs)
$$
 \|i_{p}^{\eta}(x)\|_{p}^{\eta}=
 \|D^{\frac{\eta}{q}}xD^{\frac{1-\eta}{q}}\|_{p}^{\eta}=\|x\|_{p},\quad\forall
 x\in L^{p}(\mathcal{M}).
 $$
 By Theorem 9.1 in \cite{Kos},  we have
\beq\label{interpolation}
C_{\theta}(\M^\eta, L^{1}(\mathcal{M}))=i_{p}^{\eta}( L^{p}(\mathcal{M}))
\eeq
with equal norms, where
$\theta=\frac{1}{p}$.

As in the above, for each $0\leq\eta\leq1$ , we can imbed $\A$ into
$H^{1}(\mathcal{A})$ via
$$
  i_{\8}^{\eta}: a\in \A\rightarrow D^{\eta}aD^{1-\eta}\in
  H^{1}(\mathcal{A}).
$$
Let $\A^\eta=i_{\8}^{\eta}(\A)=D^{\eta}\A D^{1-\eta}$. We write
 $$
  \|i_{\8}^{\eta}(a)\|_{\8}^{\eta}=
 \|D^{\eta}aD^{1-\eta}\|_{\8}^{\eta}=\|a\|_{\8},\quad \|a\|_{1}^\eta=\|D^{\eta}aD^{1-\eta}\|_{1},\quad \forall a\in \A.
 $$
  Hence
$(\A^\eta, H^{1}(\mathcal{A}))$ is compatible.

For the Haagerup
noncommutative $H^{p}$-space, we obtain the following  result.
\begin{lemma} \label{interpolation-maximalhp} Let $\R$ and $\widehat{\A}$ be as in \eqref{defi:R} and \eqref{defi:Ahat}, respectively.  Then for $1<p<\infty$,
$$
C_{\theta}(\widehat{\A}\:^\eta,
H^{1}(\widehat{\A}\:))=i_{p}^{\eta}( H^{p}(\widehat{\A}\:))
$$
with equal norms, where $\theta=\frac{1}{p}$.
\end{lemma}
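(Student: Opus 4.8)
The plan is to establish the two contractive inclusions $i_p^\eta(H^p(\widehat{\A}))\subseteq C_\theta(\widehat{\A}^\eta,H^1(\widehat{\A}))$ and $C_\theta(\widehat{\A}^\eta,H^1(\widehat{\A}))\subseteq i_p^\eta(H^p(\widehat{\A}))$, the first carrying $\|i_p^\eta(x)\|_{C_\theta}\le\|x\|_p$ and the second the reverse estimate. The natural starting point is Kosaki's identity \eqref{interpolation} applied to the pair $(\R,\widehat{\varphi})$, namely $C_\theta(\R^\eta,L^1(\R))=i_p^\eta(L^p(\R))$ with equal norms. Since $\widehat{\A}^\eta\subseteq\R^\eta$ and $H^1(\widehat{\A})\subseteq L^1(\R)$ form a compatible subcouple, every function admissible for the small couple is admissible for the large one, so $C_\theta(\widehat{\A}^\eta,H^1(\widehat{\A}))\hookrightarrow C_\theta(\R^\eta,L^1(\R))=i_p^\eta(L^p(\R))$, and because the defining infimum is then taken over a smaller set one gets $\|x\|_p\le\|i_p^\eta(x)\|_{C_\theta(\widehat{\A}^\eta,H^1(\widehat{\A}))}$ for every element $i_p^\eta(x)$ of the interpolation space. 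Thus every element of $C_\theta(\widehat{\A}^\eta,H^1(\widehat{\A}))$ has the form $i_p^\eta(x)$ with $x\in L^p(\R)$, and it remains to (a) produce norm-optimal interpolants landing in the Hardy spaces, and (b) show that the $x$ above actually lies in $H^p(\widehat{\A})$.

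For (a), i.e. the inclusion $i_p^\eta(H^p(\widehat{\A}))\subseteq C_\theta(\widehat{\A}^\eta,H^1(\widehat{\A}))$ with $\|i_p^\eta(x)\|_{C_\theta}\le\|x\|_p$, I would reduce to the finite subdiagonal algebras furnished by the reduction theorem. By Theorem \ref{lem:denseHp}(3) each $H^p(\A_n)$ is isometric to the $H^p$-space of a finite subdiagonal algebra, for which the complex interpolation identity $C_\theta(\A_n^\eta,H^1(\A_n))=i_p^\eta(H^p(\A_n))$ with equal norms is available (the Pisier--Xu theorem, proved in the finite/semifinite setting in \cite{BO}). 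Hence for $x\in H^p(\A_n)$ there is $f\in F(\A_n^\eta,H^1(\A_n))\subseteq F(\widehat{\A}^\eta,H^1(\widehat{\A}))$ with $f(\theta)=i_p^\eta(x)$ and $\||f|\|\le\|x\|_p$, whence $\|i_p^\eta(x)\|_{C_\theta(\widehat{\A}^\eta,H^1(\widehat{\A}))}\le\|x\|_p$. Since $\bigcup_{n\ge1}H^p(\A_n)$ is dense in $H^p(\widehat{\A})$ by Theorem \ref{lem:denseHp}(2), this estimate extends by density to all $x\in H^p(\widehat{\A})$.

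For (b), given $f\in F(\widehat{\A}^\eta,H^1(\widehat{\A}))$ with $f(\theta)=i_p^\eta(x)$, I would verify the annihilator characterization \eqref{Hp-caracterization}: $x\in H^p(\widehat{\A})$ as soon as $\tr(xb^*)=0$ for all $b\in H^q_0(\widehat{\A})$, where $\tfrac1p+\tfrac1q=1$. By \eqref{equalityHp} it suffices to take $b=D^{\frac{1-\eta}{q}}a_0D^{\frac{\eta}{q}}$ with $a_0$ a modular-analytic element of $\widehat{\A}_0=\{y\in\widehat{\A}:\widehat{\E}(y)=0\}$, and then pass to limits. To this $b$ I associate the analytic family $g(z)=D^{\gamma(z)}a_0^*D^{\delta(z)}$ with affine exponents satisfying $\gamma(z)+\delta(z)\equiv0$ and $g(\theta)$ normalized so that $\tr(f(\theta)g(\theta))=\tr(xb^*)$; the condition on the total $D$-power makes $g$ a bounded $\R$-valued analytic function on the strip (here the modular analyticity of $a_0$ is used), so that $G(z)=\tr(f(z)g(z))$ is bounded and analytic. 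On $\mathrm{Re}\,z=0$ one has $f(it)\in\widehat{\A}^\eta$, while on $\mathrm{Re}\,z=1$ one has $f(1+it)\in H^1(\widehat{\A})$; in both cases, using that $\widehat{\A}$ and $\widehat{\A}_0$ are $\sigma^{\widehat{\varphi}}$-invariant (by \eqref{maximal} and commutation of $\sigma^{\widehat{\varphi}}$ with $\widehat{\E}$), the boundary pairings reduce to pairings of an element of $H^1(\widehat{\A})$ against $J(\widehat{\A}_0)$ and hence vanish by Proposition \ref{carecterizationh1}(ii). Therefore $G$ vanishes on both edges of the strip, so $G\equiv0$ by the maximum principle, and evaluating at $\theta$ gives $\tr(xb^*)=0$. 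Letting $b$ run over a dense subset of $H^q_0(\widehat{\A})$ yields $x\in H^p(\widehat{\A})$, and combined with the norm bound from the first paragraph this completes the reverse inclusion with equal norms.

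I expect the main obstacle to be part (b): the delicate point is to choose the exponents $\gamma,\delta$ so that $g$ simultaneously maps the strip boundedly into $\R$, realizes $\tr(xb^*)$ at the center, and drives both boundary values of $G$ into the single orthogonality relation $H^1(\widehat{\A})\perp J(\widehat{\A}_0)$; verifying the boundedness and boundary continuity needed for the maximum principle (rather than the exponent bookkeeping, which is routine) is the crux. A secondary technical point, in part (a), is to make the isometry of Theorem \ref{lem:denseHp}(3) compatible with the weighted embeddings $i_p^\eta$, so that the finite interpolants of \cite{BO} can be transported verbatim to the couple $(\widehat{\A}^\eta,H^1(\widehat{\A}))$; an alternative to the finite reduction in (a) would be to build the optimal interpolants directly from a Szeg\"o-type factorization of $x\in H^p(\widehat{\A})$ as in \cite{B1}.
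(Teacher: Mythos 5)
Your first paragraph and your part (a) are exactly the paper's argument: embed the couple $(\widehat{\A}\:^\eta,H^1(\widehat{\A}\:))$ into Kosaki's couple $(\R^\eta,L^1(\R))$ to get $\|x\|_p\le\|i_p^\eta(x)\|_{C_\theta}$, then transfer the finite-case identity $C_\theta(\A_n^\eta,H^1(\A_n))=i_p^\eta(H^p(\A_n))$ from \cite{PX,BO} and extend by density of $\bigcup_n H^p(\A_n)$ using Theorem \ref{lem:denseHp}. Where you diverge is part (b). The paper disposes of the membership $x\in H^p(\widehat{\A}\:)$ in one line: since $\widehat{\A}\:^\eta\subset H^1(\widehat{\A}\:)$, the sum space of the couple is $H^1(\widehat{\A}\:)$, so $f(\theta)=i_p^\eta(x)$ lies in $H^1(\widehat{\A}\:)$ \emph{automatically} (every interpolation space sits inside $\Sigma(X_0,X_1)$); combined with $x\in L^p(\R)$ from the Kosaki step, Proposition \ref{charecterization-pro} (with $r=1$, $s=q$) gives $x\in H^p(\widehat{\A}\:)$ immediately. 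Your three-lines construction with the analytic family $g(z)=D^{\gamma(z)}a_0^*D^{\delta(z)}$ is therefore re-deriving, by a boundary-value argument on the strip, an orthogonality that the static annihilator machinery of Propositions \ref{carecterizationh1} and \ref{charecterization-pro} already encodes.

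As written, part (b) is also the only place where your proof is genuinely incomplete rather than merely longer: for $G(z)=\tr(f(z)g(z))$ to be bounded, analytic, and continuous up to the boundary you must restrict $a_0$ to modular-analytic elements of $\widehat{\A}\:_0$ (generic elements of $\widehat{\A}\:_0$ do not admit the conjugation $D^{\gamma(z)}a_0^*D^{-\gamma(z)}$ as a bounded analytic $\R$-valued function), then argue that such elements generate a dense subspace of $H^q_0(\widehat{\A}\:)$ that is invariant under the exponent bookkeeping, and verify the boundary continuity needed for the Phragm\'en--Lindel\"of step. You flag this yourself as the crux; none of it is needed. I would replace part (b) wholesale by the two-line observation above. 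Your part (a) should also record explicitly that the isometric identification of Theorem \ref{lem:denseHp}(3) is compatible with the embeddings $i_p^\eta$ (the paper handles this via the $1$-complementation statement in Theorem \ref{lem:denseHp}(4) together with Lemma \ref{sequence-sudgiagonal}(1)), but that is a routine fix.
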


\begin{proof}
By \eqref{interpolation},  $C_{\theta}(\R^{\eta},
L^{1}(\R))=i_{p}^{\eta}( L^{p}(\R))$
with equal norms.   We have that
$$
C_{\theta}(\widehat{\A}\:^\eta,
H^{1}(\widehat{\A}\:))\subset C_{\theta}(\R^{\eta},
L^{1}(\R))=i_{p}^{\eta}( L^{p}(\R)).
$$
 On the other hand,  $C_{\theta}(\widehat{\A}\:^\eta,
H^{1}(\widehat{\A}\:))\subset H^{1}(\widehat{\A}\:)$. By Proposition \ref{charecterization-pro},
 $$
 C_{\theta}(\widehat{\A}\:^\eta,
H^{1}(\widehat{\A}\:))\subset i_{p}^{\eta}( H^{p}(\widehat{\A}\:));
 $$
whence
\beq\label{inequality1}
\|x\|_p\le\|x\|_{ C_{\theta}(\widehat{\A}\:^\eta,\:
H^{1}(\widehat{\A}\:))},\quad \forall x\in  C_{\theta}(\widehat{\A}\:^\eta,
H^{1}(\widehat{\A}\:)).
\eeq
Let $\{H^p(\mathcal{A}_n)\}_{n\ge1}$ be the sequence in Theorem \ref{lem:denseHp}. Since  $H^p(\mathcal{A}_n)$ is isometric to the usual
noncommutative $H^p$-space associated with a finite
subdiagonal algebra,
 $$
 C_{\theta}(\A_n,
H^{1}(\A_n))=H^{p}(\A_n)
 $$
with equal norms, for all $ n\in \mathbb{N}$ (see the remark following Lemma 8.5 in \cite{PX} or Theorem 3.3 in \cite{BO}).
Hence, by (1) of Lemma \ref{sequence-sudgiagonal} and (4) of Theorem \ref{lem:denseHp},
 $$
 H^{p}(\A_n)=C_{\theta}(\A_n,
H^{1}(\A_n))\subset C_{\theta}(\widehat{\A}\:^\eta, H^{1}(\widehat{\A}\:))
 $$
 and
 \beq\label{inequality2}
  \|a\|_{ C_{\theta}(\widehat{\A}\:^\eta,\: H^{1}(\widehat{\A}\;))}\le\|a\|_p,\quad \forall a\in H^{p}(\A_n).
 \eeq
 Let $x\in H^{p}(\widehat{\A}\:)$. Using (2) of Theorem \ref{lem:denseHp} we obtain   a sequence $\{a_n\}_{n\ge1}$ in $\bigcup_{n\ge1}H^{p}(\A_n)$ such that
 $$
\lim_{n\rightarrow\8}\| a_n -x\|_p=0.
 $$
Since $\{H^p(\mathcal{A}_n)\}_{n\ge1}$ is increasing, using \eqref{inequality2} we obtain that
$$
\|a_n-a_m\|_{ C_{\theta}(\widehat{\A}\:^{\eta},\: H^{1}(\widehat{\A}\:))}\le\|a_n-a_m\|_p,\quad n,m\ge1.
$$
Hence $\{a_n\}_{n\ge1}$ is a Cauchy sequence in $C_{\theta}(\widehat{\A}\:^{\eta}, H^{1}(\widehat{\A}\:))$.
 So, there exists  $y$ in $C_{\theta}(\widehat{\A}\:^{\eta}, H^{1}(\widehat{\A}\:))$ such that
$$
\lim_{n\rightarrow\8}\|a_n-y\|_{ C_{\theta}(\widehat{\A}\:^{\eta},\:H^{1}(\widehat{\A}\:))}=0
$$
By \eqref{inequality1}, $\lim_{n\rightarrow\8}\|a_n-y\|_{p}=0$. Therefore, $y=x$. We  deduce that
$$
  \|x\|_{ C_{\theta}(\widehat{\A}\:^{\eta},\:H^{1}(\widehat{\A}\:))}\le\|x\|_p,\quad \forall x\in H^{p}(\widehat{\A}\:).
$$
Thus $ C_{\theta}(\widehat{\A}\:^{\eta}, H^{1}(\widehat{\A}\:))= i_{p}^{\eta}(H^{p}(\widehat{\A}\:))$.
\end{proof}

\begin{theorem} \label{interpolationhp} Let $1<p<\infty$.
Then
$$
C_{\theta}(\A^{\eta},
H^1(\mathcal{A}))=i_{p}^{\eta}( H^{p}(\mathcal{A}))
$$
with equivalent norms, where $\theta=\frac{1}{p}$.
\end{theorem}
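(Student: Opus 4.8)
The plan is to transfer the interpolation identity already established for the crossed‑product algebra $\widehat{\A}$ in Lemma \ref{interpolation-maximalhp} down to $\A$, using the conditional expectation $\Phi:\R\to\M$ of Lemma \ref{sequence-sudgiagonal} as a retraction and the inclusion $\M\hookrightarrow\R$ as the corresponding coretraction. Concretely, I would regard $\overline{X}=(\widehat{\A}\:^\eta,H^1(\widehat{\A}\:))$ and $\overline{Y}=(\A^\eta,H^1(\A))$ as compatible couples, the latter sitting inside the former componentwise: since $\A\subset\widehat{\A}$ we have $\A^\eta=D^\eta\A D^{1-\eta}\subset D^\eta\widehat{\A}D^{1-\eta}=\widehat{\A}\:^\eta$ isometrically for $\|\cdot\|_\infty^\eta$, while $H^1(\A)$ is an isometric subspace of $H^1(\widehat{\A}\:)$ by Proposition \ref{pro:Hp-subspace} (equivalently by (1) of Theorem \ref{lem:denseHp}, applied with $\M_1=\M$, $\A_1=\A$ inside $\R$). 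Denote this inclusion of couples by $S$; it is a contraction on each endpoint.

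Next I would produce the retraction. Since $\Phi$ commutes with $\{\sigma_t^{\widehat{\varphi}}\}$ and $\M,\R$ share the density operator $D$ (as in the discussion preceding Proposition \ref{pro:Hp-subspace}, now applied to $\M\subset\R$), its extension $\widehat{\Phi}$ to the Haagerup spaces $L^r(\R)\to L^r(\M)$ ($1\le r\le\infty$) is a normal contractive projection and a $D$-bimodule map, so that $\widehat{\Phi}(D^\alpha z D^\beta)=D^\alpha\widehat{\Phi}(z)D^\beta$; in particular $\widehat{\Phi}$ intertwines the imbeddings $i_p^\eta$ and $i_\infty^\eta$. Because $\Phi(\widehat{\A}\:)=\A$ by (1) of Lemma \ref{sequence-sudgiagonal}, the operator $\widehat{\Phi}$ carries $\widehat{\A}\:^\eta$ contractively onto $\A^\eta$ and, by (4) of Theorem \ref{lem:denseHp}, $H^1(\widehat{\A}\:)$ contractively onto $H^1(\A)$ (it is exactly the $1$-complementing projection there). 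Thus $P:=\widehat{\Phi}$ is a contractive morphism $\overline{X}\to\overline{Y}$, and since $\Phi|_\M=\mathrm{id}$ we have $P\circ S=\mathrm{id}_{\overline{Y}}$.

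With the retraction–coretraction pair $(S,P)$ in hand, I would invoke the standard fact that the complex method is an exact interpolation functor transferring interpolation spaces along such pairs: $C_\theta(\overline{Y})$ is carried by $S$ onto a complemented subspace of $C_\theta(\overline{X})$ with $\|y\|_{C_\theta(\overline{Y})}=\|Sy\|_{C_\theta(\overline{X})}$ (the two contractions even force equality, hence equivalence). Feeding in Lemma \ref{interpolation-maximalhp}, namely $C_\theta(\overline{X})=i_p^\eta(H^p(\widehat{\A}\:))$ with $\theta=1/p$, and applying $\widehat{\Phi}$, I would compute
$$
P\big(C_\theta(\overline{X})\big)=\widehat{\Phi}\big(i_p^\eta(H^p(\widehat{\A}\:))\big)=i_p^\eta\big(\widehat{\Phi}(H^p(\widehat{\A}\:))\big)=i_p^\eta\big(H^p(\A)\big).
$$
To pin down the norm identification I would take $y\in C_\theta(\overline{Y})\subset H^1(\A)$, write $Sy=i_p^\eta(\widehat{x})$ with $\widehat{x}\in H^p(\widehat{\A}\:)$ and $\|\widehat{x}\|_p=\|Sy\|_{C_\theta(\overline{X})}$, and apply $\widehat{\Phi}$: since $\widehat{\Phi}(y)=y$ and $i_p^\eta$ is injective, $\widehat{x}=\Phi(\widehat{x})\in H^p(\A)$, so $y\in i_p^\eta(H^p(\A))$ with $\|y\|_{C_\theta(\overline{Y})}=\|\widehat{x}\|_p=\|y\|_p^\eta$. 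The reverse inclusion follows by pushing $i_p^\eta(x)$, $x\in H^p(\A)$, through $P$ and using $\Phi|_\M=\mathrm{id}$. This yields $C_\theta(\A^\eta,H^1(\A))=i_p^\eta(H^p(\A))$ with equivalent norms.

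I expect the main obstacle to be the second paragraph: verifying that the single conditional expectation $\Phi$ acts compatibly as a contractive projection on both endpoints $\widehat{\A}\:^\eta\to\A^\eta$ and $H^1(\widehat{\A}\:)\to H^1(\A)$ while intertwining the $D$-imbeddings. This is precisely where one must use that $\Phi$ commutes with $\{\sigma_t^{\widehat{\varphi}}\}$ and that $\M$ and $\R$ have a common density operator $D$, so that $\widehat{\Phi}$ is a $D$-bimodule map; once this consistency is secured, the retraction theorem does the rest essentially formally.
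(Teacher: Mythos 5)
Your proposal is correct, and it shares the paper's essential strategy --- everything is made to rest on Lemma \ref{interpolation-maximalhp} for the crossed-product couple $(\widehat{\A}\:^\eta,H^{1}(\widehat{\A}\:))$ together with the $1$-complementation of the $\A$-spaces inside the $\widehat{\A}$-spaces from Theorem \ref{lem:denseHp}(4) --- but you package the transfer differently, and more economically, as a single retraction--coretraction argument with $S$ the inclusion and $P=\widehat{\Phi}$ the ($D$-bimodular, trace/state-compatible) extension of $\Phi$. The paper instead splits the work: for the inclusion $C_{\theta}(\A^\eta,H^1(\A))\subset i_p^\eta(H^p(\A))$ it passes through Kosaki's identity $C_{\theta}(\M^{\eta},L^{1}(\M))=i_{p}^{\eta}(L^{p}(\M))$ and Proposition \ref{charecterization-pro}, and for the reverse inclusion it constructs the explicit analytic test functions $f_{a}(z)=e^{\delta(z^2-\theta^2)}D^{\eta(1-z)}aD^{z}D^{(1-\eta)(1-z)}$ for $a\in\A$ and then runs a density/Cauchy-sequence argument in $C_\theta$, using the complementation only to transfer norms (equation \eqref{equality-1}). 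Your route replaces both of these steps by the formal facts that $P$ maps $C_\theta(\overline{X})=i_p^\eta(H^p(\widehat{\A}\:))$ into $C_\theta(\overline{Y})$ and fixes $i_p^\eta(H^p(\A))$ pointwise, which in particular yields the reverse inclusion without any explicit analytic functions; what it buys is brevity and equality (not merely equivalence) of norms, at the price of having to verify carefully --- as you rightly flag --- that $\widehat{\Phi}$ is simultaneously a contractive morphism on both endpoints intertwining $i_\infty^\eta$ and $i_p^\eta$, which is exactly the content of Lemma \ref{sequence-sudgiagonal}(1), the $D$-bimodule property of state-preserving expectations commuting with the modular group (Proposition 2.3 of \cite{JX}), and Theorem \ref{lem:denseHp}(4). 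Since all of these are available in the paper, your argument closes; the only point worth making explicit is the injectivity of $i_p^\eta$ (left/right multiplication by powers of the non-singular operator $D$), which you use to conclude $\widehat{x}=\widehat{\Phi}(\widehat{x})$.
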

\begin{proof}
Using \eqref{interpolation} we obtain that $C_{\theta}(\M^{\eta},
L^{1}(\M))=i_{p}^{\eta}( L^{p}(\M))$
with equal norms.   It is clear that
$$
C_{\theta}(\A^\eta,
H^{1}(\A\:))\subset C_{\theta}(\M^{\eta},
L^{1}(\M))=i_{p}^{\eta}( L^{p}(\M)).
$$
Since  $C_{\theta}(\A^\eta,
H^{1}(\A))\subset H^{1}(\A)$, by Proposition \ref{charecterization-pro}, we have that
$$
 C_{\theta}(\A,
H^{1}(\A))\subset i_{p}^{\eta}( H^{p}(\A)).
 $$
On the other hand, by (i) of Lemma \ref{sequence-sudgiagonal} and (iv) of Theorem \ref{lem:denseHp}, we have that
$\A^{\eta},\,H^1(\A),\;H^{p}(\mathcal{A})$ are 1-completed in $\widehat{\A}\:^\eta,\;
H^{1}(\widehat{\A}\:),\; H^{p}(\widehat{\A}\:)$, respectively. Hence,
$$
\|x\|_{ C_{\theta}(\A,\:
H^{1}(\A))}= \|x\|_{ C_{\theta}(\widehat{\A}\:^\eta,\: H^{1}(\widehat{\A}\;))},\quad \forall x\in  C_{\theta}(\A^\eta,
H^{1}(\A))
$$
and
$$
\|x\|_{ H^{p}(\widehat{\A}\;)}=\|x\|_{H^p(\A)},\quad \forall x\in
H^{p}(\A).
$$
 Using Lemma \ref{interpolation-maximalhp} we get
\beq\label{equality-1}
\|x\|_{ C_{\theta}(\A,\:
H^{1}(\A))}=\|x\|_p,\quad \forall x\in  C_{\theta}(\A^\eta,
H^{1}(\A)).
\eeq

Let $a\in\A$. For $\delta>0$, we define
$$
f_{a}(z)=e^{\delta(z^2-\theta^2)}D^{\eta(1-z)}aD^{z}D^{(1-\eta)(1-z)},
\; 0\leq\mbox{Re}z\leq1.
$$
Obviously,
$$
 f_{a}(it)\in \mathcal{A}^{\eta},\quad\| f_{a}(it)\|_{\infty}^{\eta}\le\|a\|_{\8},
 $$
 $$
f_{a}(1+it)\in H^1(\A),\quad \| f_{a}(1+it)\|_{1}\leq e^{\delta(1-\theta^2)}\|aD\|_{1}\le e^{\delta(1-\theta^2)}\|a\|_{\8},
 $$
  $$
 f_{a}(z)\in
 F(\mathcal{A}^{\eta},H^{1}(\A)),\quad
 \||f_{a}|\|\le e^{\delta(1-\theta^2)}\|a\|_{\8},\quad f_{a}(\theta)=D^{\frac{\eta}{q}}aD^\frac{1}{p}D^{\frac{1-\eta}{q}}.
$$
Thus $i_{p}^{\eta}(aD^\frac{1}{p})\in  C_{\theta}(\A^\eta,H^{1}(\A))$.
Therefore, applying \eqref{equality-1} we deduce that
\beq\label{equality-2}
\|aD^{\frac{1}{p}}\|_{ C_{\theta}(\A,\:
H^{1}(\A))}=\|aD^{\frac{1}{p}}\|_p,\quad \forall a\in  \A.
\eeq
$
C_{\theta}(\A^{\eta},H^1(\mathcal{A}))=i_{p}^{\eta}( H^{p}(\mathcal{A}))$ is proved similarly as  Lemma \ref{interpolation-maximalhp} by using  \eqref{equality-2} and Definition \ref{def:hp}. We omit the details.
\end{proof}

Let
$H^{p}(\mathcal{A})_{L}=C_{\theta}(\mathcal{A}^{0},H^{1}(\mathcal{A}))$ and $H^{p}(\mathcal{A})_{R}=C_{\theta}(\mathcal{A}^{1},H^{1}(\mathcal{A}))$.
We call $H^{p}(\mathcal{A})_{L}$  and $H^{p}(\mathcal{A})_{R}$ the left and the right  $H^{p}$-spaces with respect to $D$, respectively. By Theorem \ref{interpolationhp}, we have
that
$$
H^{p}(\mathcal{A})_{L}=H^{p}(\mathcal{A})D^{\frac{1}{q}},\quad H^{p}(\mathcal{A})_{R}=D^{\frac{1}{q}}H^{p}(\mathcal{A}).
$$

We have the following Stein-Weiss type
interpolation theorem.

\begin{theorem}  Let $1< p<\infty,\;\frac{1}{p}+\frac{1}{q}=1,\;0\le\eta\le1$. Then
$$
C_{\eta}(H^{p}(\mathcal{A})D^{\frac{1}{q}},D^{\frac{1}{q}}H^{p}(\mathcal{A}))=
D^{\frac{\eta}{q}}H^{p}(\mathcal{A})D^{\frac{1-\eta}{q}}.
$$
Consequently,
$$
C_{\eta}(H^{p}(\mathcal{A})_{L},H^{p}(\mathcal{A})_{R})=
C_{\theta}(\mathcal{A}^{\eta},H^{1}(\mathcal{A}))\mid_{\theta=\frac{1}{p}}
$$
\end{theorem}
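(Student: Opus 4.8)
The plan is to reduce the theorem to Kosaki's $L^p$ Stein--Weiss theorem for the underlying Haagerup $L^p$-spaces, combined with the interpolation identity of Theorem \ref{interpolationhp}. Throughout I write $\Phi(x)=D^{-\frac{\eta}{q}}xD^{-\frac{1-\eta}{q}}$, and I recall from the construction preceding the theorem that $H^p(\A)_L=H^p(\A)D^{\frac1q}$ and $H^p(\A)_R=D^{\frac1q}H^p(\A)$. The one structural input I would use repeatedly is the $D^{it}$-invariance of $H^p(\A)$: since $\lambda(t)=D^{it}$ and $\sigma_t^\varphi(x)=D^{it}xD^{-it}$, maximality \eqref{maximal} gives $\sigma_t^\varphi(\A)=\A$, and hence $D^{is}\,H^p(\A)\,D^{-is}=H^p(\A)$ isometrically for every real $s$ (powers of $D$ commute and $D^{is}$ is unitary).

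First I would prove the inclusion $C_{\eta}\big(H^p(\A)D^{\frac1q},D^{\frac1q}H^p(\A)\big)\subseteq D^{\frac{\eta}{q}}H^p(\A)D^{\frac{1-\eta}{q}}$ together with the bound $\|\Phi(x)\|_p\le\|x\|_{C_\eta}$. Since $H^p(\A)D^{\frac1q}$ and $D^{\frac1q}H^p(\A)$ sit isometrically inside $L^p(\M)D^{\frac1q}$ and $D^{\frac1q}L^p(\M)$, monotonicity of the complex method embeds the $C_\eta$ of the $H^p$-couple contractively into the $C_\eta$ of the $L^p$-couple; the latter equals $D^{\frac{\eta}{q}}L^p(\M)D^{\frac{1-\eta}{q}}$ with equal norms by the $L^p$ Stein--Weiss theorem of \cite{Kos}. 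Thus $\Phi$ is a contraction from $C_\eta(\dots)$ into $L^p(\M)$. To see that its range lands in $H^p(\A)$, I use that the intersection $H^p(\A)D^{\frac1q}\cap D^{\frac1q}H^p(\A)$ is dense in the complex interpolation space (a standard property of Calder\'on's method): for $y=hD^{\frac1q}$ in this intersection one has $\Phi(y)=D^{-\frac{\eta}{q}}hD^{\frac{\eta}{q}}\in H^p(\A)$ by the $D^{it}$-invariance above, so $\Phi$ carries a dense subspace into the closed subspace $H^p(\A)\subset L^p(\M)$, whence $\Phi(C_\eta)\subseteq H^p(\A)$.

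For the reverse inclusion and the matching norm estimate I would argue with an explicit analytic family, exactly as in the proof of Theorem \ref{interpolationhp}. Given $h\in H^p(\A)$ (it suffices to treat $h=aD^{\frac1p}$ with $a\in\A$, by density) and $\delta>0$, set $F(z)=e^{\delta(z^2-\eta^2)}D^{\frac{z}{q}}hD^{\frac{1-z}{q}}$ on $0\le\mbox{Re}\,z\le1$. The $D^{it}$-invariance gives $F(it)=e^{\delta(-t^2-\eta^2)}(D^{\frac{it}{q}}hD^{-\frac{it}{q}})D^{\frac1q}\in H^p(\A)D^{\frac1q}$ and $F(1+it)=e^{\delta(1-t^2-\eta^2)}D^{\frac1q}(D^{\frac{it}{q}}hD^{-\frac{it}{q}})\in D^{\frac1q}H^p(\A)$, with edge norms bounded by $e^{\delta(1-\eta^2)}\|h\|_p$ and decaying as $t\to\pm\infty$ thanks to the Gaussian factor, while $F(\eta)=D^{\frac{\eta}{q}}hD^{\frac{1-\eta}{q}}$. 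Letting $\delta\to0$ yields $\|D^{\frac{\eta}{q}}hD^{\frac{1-\eta}{q}}\|_{C_\eta}\le\|h\|_p$, which combined with the first step gives equality of the two spaces with equal norms.

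The displayed consequence is then immediate: by the construction preceding the theorem $C_\eta(H^p(\A)_L,H^p(\A)_R)=D^{\frac{\eta}{q}}H^p(\A)D^{\frac{1-\eta}{q}}=i_p^\eta(H^p(\A))$, and Theorem \ref{interpolationhp} identifies the right-hand side with $C_\theta(\A^\eta,H^1(\A))|_{\theta=1/p}$. I expect the main obstacle to be the admissibility of the family $F$ in the reverse step: the unbounded powers $D^{z/q}$ and $D^{(1-z)/q}$ must be shown to produce a bounded, strip-analytic, $L^1$-continuous function with the correct boundary behaviour, which is precisely why one regularizes with $e^{\delta(z^2-\eta^2)}$ and first restricts to $h=aD^{1/p}$, $a\in\A$, before passing to the limit. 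The forward step, by contrast, is routine once Kosaki's $L^p$ theorem and the density of the intersection are in hand.
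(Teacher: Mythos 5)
Your overall strategy---Kosaki's $L^p$ Stein--Weiss theorem for the upper bound, the regularized analytic family $e^{\delta(z^2-\eta^2)}D^{z/q}xD^{(1-z)/q}$ for the lower bound, then Theorem \ref{interpolationhp} for the second display---is the same as the paper's, and your reverse inclusion and the deduction of the consequence are fine. The gap is in the forward inclusion, at the point where you show that $\Phi$ maps into $H^p(\A)$. For $y=hD^{1/q}$ in the intersection you write $\Phi(y)=D^{-\eta/q}hD^{\eta/q}$ and assert membership in $H^p(\A)$ ``by the $D^{it}$-invariance above.'' But the invariance you established is $D^{is}H^p(\A)D^{-is}=H^p(\A)$ for \emph{real} $s$, i.e.\ conjugation by the unitaries $D^{is}$; what you actually need is conjugation by the \emph{real} power $D^{-\eta/q}$, which is an unbounded, non-unitary operator. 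That is not an instance of the cited invariance and does not follow from it by continuation: in the $\sigma$-finite (type III) setting $D^{-\eta/q}aD^{\eta/q}$ need not even be a well-defined element of $\M$ for $a\in\A$, and the statement that ``moving real powers of $D$ from one side to the other'' preserves $H^p(\A)$ is precisely the nontrivial content of Proposition \ref{charecterization-pro} (equivalently of \eqref{equalityHp} plus a duality argument), not of $\sigma^\varphi$-invariance.

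The repair is the one the paper uses, and it also makes your density-of-the-intersection detour unnecessary: every $y$ in $C_{\eta}(H^{p}(\A)D^{1/q},D^{1/q}H^{p}(\A))$ lies in $H^1(\A)$, since both endpoint spaces embed into $H^1(\A)$ by \eqref{equalityHp} and H\"older; by Kosaki's theorem $y=D^{\eta/q}wD^{(1-\eta)/q}$ for a unique $w\in L^p(\M)$; and Proposition \ref{charecterization-pro} with $r=1$, $s=q$ says exactly that $w\in H^p(\A)$ because $D^{\eta/q}wD^{(1-\eta)/q}\in H^{1}(\A)$. With that substitution in place of the conjugation step, your argument closes and coincides with the paper's proof.
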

\begin{proof}
By Theorem 11.2 of \cite{Kos}, we see that
$$
C_{\eta}(L^{p}(\M)D^{\frac{1}{q}},D^{\frac{1}{q}}L^{p}(\M))=
D^{\frac{\eta}{q}}L^{p}(\M)D^{\frac{1-\eta}{q}}.
$$
Thus
$$
C_{\eta}(H^{p}(\A)D^{\frac{1}{q}},D^{\frac{1}{q}}H^{p}(\A))\subset
D^{\frac{\eta}{q}}L^{p}(\M)D^{\frac{1-\eta}{q}}.
$$
It is trivial that $C_{\eta}(H^{p}(\A)D^{\frac{1}{q}},D^{\frac{1}{q}}H^{p}(\A))\subset H^1(\A)$.  By Proposition \ref{charecterization-pro}, we deduce
\beq\label{inclusion-1}
C_{\eta}(H^{p}(\A)D^{\frac{1}{q}},D^{\frac{1}{q}}H^{p}(\A))\subset
D^{\frac{\eta}{q}}H^{p}(\A)D^{\frac{1-\eta}{q}}.
\eeq

Conversely, assume $x\in H^{p}(\A)$.  Let $\delta>0$. Define for $0\leq\mbox{Re}z\leq1$
$$
f(z)=e^{\delta(z^2-\eta^2)}D^{\frac{z}{q}}xD^{\frac{1-z}{q}}.
$$
It is clear that
$$
 f(it)\in H^{p}(\mathcal{A})D^{\frac{1}{q}},\quad\| f(it)\|_{p}^{0}\le\|x\|_{p},
 $$
 $$
 f(1+it)\in D^{\frac{1}{q}}H^{p}(\mathcal{A}),\quad \| f(1+it)\|_{p}^1\leq e^{\delta(1-\eta^2)}\|x\|_{p},
 $$
  $$
 f(z)\in
 F(H^{p}(\mathcal{A})D^{\frac{1}{q}},D^{\frac{1}{q}}H^{p}(\mathcal{A})),\quad
 \||f|\|\le e^{\delta(1-\eta^2)}\|x\|_{p},\quad f(\eta)=D^{\frac{\eta}{q}}xD^{\frac{1-\eta}{q}}.
$$
Thus $i_{p}^{\eta}(x)\in  C_{\eta}(H^{p}(\mathcal{A})D^{\frac{1}{q}},D^{\frac{1}{q}}H^{p}(\mathcal{A}))$.
It follows that
\beq\label{inclusion-2}
D^{\frac{\eta}{q}}H^{p}(\A)D^{\frac{1-\eta}{q}}\subset C_{\eta}(H^{p}(\A)D^{\frac{1}{q}},D^{\frac{1}{q}}H^{p}(\A)).
\eeq
Therefore, by \eqref{inclusion-1} and \eqref{inclusion-2}, the desired result holds. The second result follows immediately by virtue of Theorem \ref{interpolationhp}.
\end{proof}

If $1< p<\infty,\;\frac{1}{p}+\frac{1}{q}=1$, the duality between $i_{p}^{\eta}( H^{p}(\mathcal{A}))$
and $i_{q}^{\eta}( J(H^{q}(\mathcal{A})))$ is defined by
$(i_{p}^{\eta}(a),i_{q}^{\eta}(b))\mapsto tr(ab)$, for which
$$
(i_{p}^{\eta}( H^{p}(\mathcal{A})))'=i_{q}^{\eta}(J( H^{q}(\mathcal{A})))
$$
isometrically ( see Corollary 3.4 in \cite{J2}).
Let $BMO(\A)$ be the noncommutative BMO  space defined in \cite{J2}. We have that $H^1(\A)^*=BMO(\A)$ (see Theorem 3.10 in \cite{J2}).
\begin{theorem}  Let $0\le\eta\le1 $. Then
$$
C_{\frac{1}{p}}(BMO(\mathcal{A}),H^{1}(\mathcal{A})) =i_{p}^{\eta}(
H^{p}(\mathcal{A})),\quad \forall p\in(1,\infty).
$$
\end{theorem}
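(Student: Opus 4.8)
The plan is to prove the equality as a two–sided continuous inclusion,
$$
i_p^\eta(H^p(\A))\subseteq C_{1/p}(BMO(\A),H^1(\A))\subseteq i_p^\eta(H^p(\A)),
$$
where the couple $(BMO(\A),H^1(\A))$ is realized inside $L^1(\M)$ with the same $\eta$–twisted convention that underlies $(\A^\eta,H^1(\A))$ in Theorem \ref{interpolationhp}. The ingredients are Theorem \ref{interpolationhp}, the duality $H^1(\A)^*=BMO(\A)$, and the bracket duality $(i_p^\eta(H^p(\A)))'=i_q^\eta(J(H^q(\A)))$ with $\frac1p+\frac1q=1$.

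First I would treat $i_p^\eta(H^p(\A))\subseteq C_{1/p}(BMO(\A),H^1(\A))$. Since $\A\subset\M$ and $\|\cdot\|_{BMO(\A)}\lesssim\|\cdot\|_\8$, the $\eta$–twisted embedding gives a bounded inclusion of couples $(\A^\eta,H^1(\A))\hookrightarrow(BMO(\A),H^1(\A))$ with common endpoint $H^1(\A)$. Monotonicity of the complex method yields $C_{1/p}(\A^\eta,H^1(\A))\hookrightarrow C_{1/p}(BMO(\A),H^1(\A))$, and Theorem \ref{interpolationhp} identifies the domain with $i_p^\eta(H^p(\A))$; this is the desired inclusion, with $\|x\|_{C_{1/p}(BMO(\A),H^1(\A))}\le C\,\|x\|_{i_p^\eta(H^p(\A))}$.

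For the reverse inclusion I would argue by duality, using that $i_p^\eta(H^p(\A))$ is reflexive for $1<p<\8$ (a closed subspace of the reflexive space $L^p(\M)$), so that $(i_q^\eta(J(H^q(\A))))^*=i_p^\eta(H^p(\A))$ via $tr(\,\cdot\,\cdot\,)$. Fix $x\in C_{1/p}(BMO(\A),H^1(\A))$ with a representing function $f\in F(BMO(\A),H^1(\A))$, $f(\theta)=x$, $\theta=\frac1p$, and $\||f|\|\le\|x\|_{C_{1/p}}+\e$. Given $b$ in the unit ball of $i_q^\eta(J(H^q(\A)))$, I would build an analytic family $g$ with $g(\theta)=b$ and controlled $F$–norm by starting from the conjugate–exponent instance of the first inclusion, namely $i_q^\eta(H^q(\A))\subseteq C_{1/q}(BMO(\A),H^1(\A))$, and pulling the resulting family back through the adjoint operation $J$ and the reflection $z\mapsto 1-\bar z$ (which preserves analyticity), so that $g(\theta)=b$ and the boundary pairings realize $H^1(\A)^*=BMO(\A)$. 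The scalar function $h(z)=tr(f(z)\,g(z))$ is then bounded and analytic on the strip, with $|h(it)|\le\|f(it)\|_{BMO(\A)}\,\|g(it)\|$ and $|h(1+it)|\le\|f(1+it)\|_{H^1(\A)}\,\|g(1+it)\|$ by the duality $BMO(\A)=H^1(\A)^*$ read in both directions. The three–lines lemma gives $|tr(xb)|=|h(\theta)|\le\||f|\|\,\||g|\|$, and taking the supremum over $b$ shows that $x$ defines a bounded functional on $i_q^\eta(J(H^q(\A)))$ of norm at most $\|x\|_{C_{1/p}}$; reflexivity then places $x$ in $i_p^\eta(H^p(\A))$ with $\|x\|_{i_p^\eta(H^p(\A))}\le\|x\|_{C_{1/p}(BMO(\A),H^1(\A))}$.

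The reverse inclusion is the crux, and within it the delicate point is the construction of the dual family $g$: I must track the adjoint $J$ and the reflection in $z$ so that the functional $b\in i_q^\eta(J(H^q(\A)))$ is genuinely realized by an analytic family while $h(z)=tr(f(z)g(z))$ stays analytic and its boundary values exhibit the dualities $H^1(\A)^*=BMO(\A)$ at $\mathrm{Re}\,z=0,1$ (here the precise trace convention, inner–product versus straight bracket, must be matched against Corollary~3.4 and Theorem~3.10 of \cite{J2}). Two further routine checks are the density hypotheses guaranteeing $(i_q^\eta(J(H^q(\A))))^*=i_p^\eta(H^p(\A))$, and the compatibility of the $\eta$–dependent embeddings of $BMO(\A)$, $H^1(\A)$ and $i_p^\eta(H^p(\A))$ into one ambient $L^1(\M)$, so that all three couples and the trace bracket refer to the same objects. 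Verifying that $h$ is bounded and continuous up to the boundary, as needed for the three–lines lemma, is immediate from the continuity built into the spaces $F(\cdot,\cdot)$.
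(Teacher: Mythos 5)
Your route is genuinely different from the paper's: the paper never works directly with the endpoint couple $(BMO(\A),H^1(\A))$, but instead combines the reiteration theorem with the abstract duality theorem (Corollary 4.5.2 of \cite{BL}) applied only to couples $(H^1(\A),H^s(\A))$ with $1<s<\8$ -- where the intersection is dense in both spaces and one endpoint is reflexive -- to get $i_p^\eta(H^p(\A))=C_{r/p}(BMO(\A),i_r^\eta(H^r(\A)))$ for $1<r<p$, and then invokes Wolff's interpolation theorem to slide the interior endpoint $H^r$ down to $H^1$. That detour through Wolff exists precisely to avoid the direct endpoint duality argument you propose, and your version of that argument has a genuine gap.

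The gap is in the last step of your reverse inclusion. Your three-lines argument (granting the construction of $g$ and the consistency of the pairings) shows that $b\mapsto tr(xb)$ is a bounded functional on $i_q^\eta(J(H^q(\A)))$, hence is represented by some $y\in i_p^\eta(H^p(\A))$ with $\|y\|_p\le\|x\|_{C_{1/p}}$. But ``reflexivity then places $x$ in $i_p^\eta(H^p(\A))$'' is a non sequitur: you must still show $x=y$, i.e.\ that the pairing against $i_q^\eta(J(H^q(\A)))$ identifies $x$, and a priori $x$ only lives in $\Sigma(BMO(\A),H^1(\A))$. For a general element of that sum the bracket $tr(xb)$ with $b\in J(H^q(\A))\subset L^q(\M)$ is not even defined (the product of two $L^1$-type elements is only in $L^{1/2}$), so the same ill-definedness already afflicts $h(z)=tr(f(z)g(z))$ in the interior of the strip: $f(z)\in BMO(\A)+H^1(\A)$ and $g(z)\in J(H^1(\A))+J(BMO(\A))$ produce cross terms with no trace. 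Repairing this forces you to pass to the dense subclass of functions valued in $BMO(\A)\cap H^1(\A)$, check that the two boundary realizations of the $H^1$--$BMO$ pairing agree on the intersection, verify $BMO(\A)\cap H^1(\A)\subset i_p^\eta(H^p(\A))$ so that $x=y$ holds there, and then run a weak-compactness limiting argument -- in effect reproving the duality theorem for the complex method at a $BMO$ endpoint, where its standard hypothesis ($X_0\cap X_1$ dense in $X_0$) fails. The natural shortcut, first placing $x$ in $i_p^\eta(L^p(\M))$ so that Proposition \ref{charecterization-pro} applies, would require $C_{1/p}(BMO(\A),L^1(\M))\subset i_p^\eta(L^p(\M))$, a Fefferman--Stein type statement that is not established here and is essentially of the same depth as the theorem itself. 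Your forward inclusion is plausible but also rests on the unverified estimate $\|D^\eta aD^{1-\eta}\|_{BMO(\A)}\lesssim\|a\|_\8$; the checks you label routine are in fact where the difficulty lies, and the paper's Wolff-based argument is the standard way around them.
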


\begin{proof} By Theorem \ref{interpolationhp} and the reiteration theorem (\cite{BL}, Theorem
4.6.1), for $1\leq p_{0},\;p_{1},\;p<\infty$ and $0<\theta<1$ with
$\frac{1}{p}=\frac{1-\theta}{p_{0}}+\frac{\theta}{p_{1}}$ we have
\beq\label{interpolation-hp-hq}
C_{\theta}(i_{p_{0}}^{\eta}( H^{p_{0}}(\mathcal{A})), i_{p_{1}}^{\eta}(
H^{p_{1}}(\mathcal{A})))=i_{p}^{\eta}( H^{p}(\mathcal{A}))
\eeq
with equal norms.

Let $1< r< p<\infty$ and $\frac{1}{p}+\frac{1}{q}=1,\;\frac{1}{r}+\frac{1}{s}=1$.
Then  $1< q< s<\infty$. By \eqref{interpolation-hp-hq}, the following holds with equivalent norms
$$
C_{\frac{r}{p}}(i_{1}^{\eta}( H^{1}(\mathcal{A})), i_{s}^{\eta}(
H^{s}(\mathcal{A})))=i_{q}^{\eta}( H^{q}(\mathcal{A})).
$$
Applying Corollary 4.5.2 in \cite{BL} we obtain that
$$\begin{array}{rl}
 i_{p}^{\eta}( H^{p}(\mathcal{A})) & =(i_{q}^{\eta}(J(
H^{q}(\mathcal{A}))))'\\
&=(J(C_{\frac{r}{p}}(i_{1}^{\eta}( H^{1}(\mathcal{A})), i_{s}^{\eta}(
H^{s}(\mathcal{A})))))'\\
&=C_{\frac{r}{p}}(BMO(\mathcal{A}), i_{r}^{\eta}( H^{r}(\mathcal{A}))).
\end{array}
$$

Moreover we deduce from \eqref{interpolation-hp-hq} that
$$
i_{q}^{\eta}( H^{r}(\mathcal{A}))=C_{\frac{q}{s}}(i_{1}^{\eta}( H^{1}(\mathcal{A})), i_{p}^{\eta}(
H^{p}(\mathcal{A})))=C_{1-\frac{q}{s}}(i_{p}^{\eta}(H^{p}(\mathcal{A})),i_{1}^{\eta}( H^{1}(\mathcal{A}))).
$$

Using Wolff's interpolation theorem (see \cite{W})
 we obtain that
$$
C_{\frac{1}{p}}(BMO(\mathcal{A}),H^{1}(\mathcal{A})) =i_{p}^{\eta}(
H^{p}(\mathcal{A})).
$$

\end{proof}

\subsection*{Acknowledgement}  T.N. Bekjan is partially supported by NSFC grant No.11771372, M. Raikhan is partially supported by  project AP05131557 of the Science Committee of Ministry of Education and Science of the Republic of Kazakhstan.

\end{document}